\theoremstyle{plain}
\newtheorem{theorem}{Theorem}[section]
\newtheorem{corollary}[theorem]{Corollary}
\newtheorem{proposition}[theorem]{Proposition}
\newtheorem{lemma}[theorem]{Lemma}
\theoremstyle{definition}
\newtheorem*{definition*}{Definition}
\newtheorem*{example*}{Example}
\newtheorem*{remark*}{Remark}
\newcommand{\QQQ}{\mathscr{Q}}
\newcommand{\UUU}{\mathscr{U}}
\begin{document}
\title{Number theoretical properties of Romik's dynamical system}
\author{Byungchul Cha}
\address{Muhlenberg College, 2400 Chew st, Allentown, PA, 18104, USA}
\email{cha@muhlenberg.edu}

\author{Dong Han Kim}
\address{Department of Mathematics Education, Dongguk University - Seoul, 30 Pildong-ro 1-gil, Jung-gu, Seoul, 04620 Korea}
\email{kim2010@dongguk.edu}

\thanks{Research supported by the National Research Foundation of Korea (NRF-2018R1A2B6001624).}

\subjclass[2010]{Primary: 11J70, secondary: 11A55}

\keywords{Pythagorean triple; continued fraction; Berggren theorem; Romik system}

\begin{abstract}
We study a dynamical system that was originally defined by Romik in 2008 using an old theorem of Berggren concerning Pythagorean triples.
Romik's system is closely related to the Farey map on the unit interval which generates an additive continued fraction algorithm.
We explore some number theoretical properties of the Romik system.
In particular, we prove an analogue of Lagrange's theorem in the case of the Romik system on the unit quarter circle, which states that a point possesses an eventually periodic digit expansion if and only if the point is defined over a real quadratic extension field of rationals.
\end{abstract}

\maketitle

\section{Introduction}

One of the oldest and most classical theorems in number theory is perhaps the infinitude of \emph{primitive Pythagorean triples}, that is, positive integer triples $(x, y, z)$ without common factor satisfying $x^2 + y^2 = z^2$. 
Less known is the fact that the set of all such triples can be equipped with a certain tree-like structure. 
To explain, let
\begin{equation}\label{Ms}
M_1=
\begin{pmatrix}
-1 & 2 & 2 \\
-2 & 1 & 2 \\
-2 & 2 & 3 \\
\end{pmatrix},
\quad
M_2=
\begin{pmatrix}
1 & 2 & 2 \\
2 & 1 & 2 \\
2 & 2 & 3 \\
\end{pmatrix},
\quad
M_3=
\begin{pmatrix}
1 & -2 & 2 \\
2 & -1 & 2 \\
2 & -2 & 3 \\
\end{pmatrix}.
\end{equation}
\begin{figure}
\begin{center}
\tikzset{triarrow/.pic={
    \draw (0, 0) -- (0, 0.5) node[above]{$\vdots$};
    \draw (-0.1, 0) -- (-0.2, 0.5) node[above]{$\vdots$};
    \draw (0.1, 0) -- (0.2, 0.5) node[above]{$\vdots$};
  }
}
\begin{tikzpicture}[->, >=latex', auto, xscale=1.2]
\node (base) at (0, -0.5) {$(3, 4, 5)$};

\node (1f1) at (-3.5, 1) {$(15, 8, 17)$};
\node (1f2) at (0, 1) {$(21, 20, 29)$};
\node (1f3) at (3.5, 1) {$(5, 12, 13)$};

\pic at (-4.5, 3.3) {triarrow};
\pic at (-3.5, 3.8) {triarrow};
\pic at (-2.5, 4.3) {triarrow};

\node (2f1) at (-4.5, 3) {$(35, 12, 37)$};
\node (2f2) at (-3.5, 3.5) {$(65, 72, 97)$};
\node (2f3) at (-2.5, 4) {$(33, 56, 65)$};

\node (2f4) at (-1, 3) {$(77, 36, 85)$};
\node (2f5) at (0, 3.5) {$(119, 120, 169)$};
\node (2f6) at (1.5, 4) {$(39, 80, 89)$};

\pic at (-1.3, 3.3) {triarrow};
\pic at (0, 3.8) {triarrow};
\pic at (1.5, 4.3) {triarrow};

\node (2f7) at (2.5, 3) {$(45, 28, 53)$};
\node (2f8) at (3.5, 3.5) {$(55, 48, 73)$};
\node (2f9) at (4.5, 4) {$(7, 24, 25)$};

\pic at (2.5, 3.3) {triarrow};
\pic at (3.5, 3.8) {triarrow};
\pic at (4.5, 4.3) {triarrow};

\draw (base) to node[font=\footnotesize]{${M_1}$} (1f1);
\draw (base) to node[font=\footnotesize]{${M_2}$} (1f2);
\draw (base) to node[below right, font=\footnotesize]{${M_3}$} (1f3);

\draw (1f1) to node[font=\footnotesize]{${M_1}$} (2f1); 
\draw (1f1) to node[font=\footnotesize]{${M_2}$} (2f2); 
\draw (1f1) to node[right, font=\footnotesize]{${M_3}$} (2f3); 

\draw (1f2) to node[font=\footnotesize]{${M_1}$} (2f4); 
\draw (1f2) to node[font=\footnotesize]{${M_2}$} (2f5); 
\draw (1f2) to node[right, font=\footnotesize]{${M_3}$} (2f6); 

\draw (1f3) to node[font=\footnotesize]{${M_1}$} (2f7); 
\draw (1f3) to node[font=\footnotesize]{${M_2}$} (2f8); 
\draw (1f3) to node[right, font=\footnotesize]{${M_3}$} (2f9); 
\end{tikzpicture}
\begin{tikzpicture}[->, >=latex', auto, xscale=1.2]
\node (base) at (0, -0.5) {$(4, 3, 5)$};

\node (1f1) at (-3.5, 1) {$(12, 5, 13)$};
\node (1f2) at (0, 1) {$(20, 21, 29)$};
\node (1f3) at (3.5, 1) {$(8, 15, 17)$};

\node (2f1) at (-4.5, 3) {$(24, 7, 25)$};
\node (2f2) at (-3.5, 3.5) {$(48, 55, 73)$};
\node (2f3) at (-2.5, 4) {$(28, 45, 53)$};

\pic at (-4.5, 3.3) {triarrow};
\pic at (-3.5, 3.8) {triarrow};
\pic at (-2.5, 4.3) {triarrow};

\node (2f4) at (-1, 3) {$(80, 39, 89)$};
\node (2f5) at (0, 3.5) {$(120, 119, 169)$};
\node (2f6) at (1.5, 4) {$(36, 77, 85)$};

\pic at (-1.3, 3.3) {triarrow};
\pic at (0, 3.8) {triarrow};
\pic at (1.5, 4.3) {triarrow};

\node (2f7) at (2.5, 3) {$(56, 33, 65)$};
\node (2f8) at (3.5, 3.5) {$(72, 65, 97)$};
\node (2f9) at (4.5, 4) {$(12, 35, 37)$};

\pic at (2.5, 3.3) {triarrow};
\pic at (3.5, 3.8) {triarrow};
\pic at (4.5, 4.3) {triarrow};

\draw (base) to node[font=\footnotesize]{${M_1}$} (1f1);
\draw (base) to node[font=\footnotesize]{${M_2}$} (1f2);
\draw (base) to node[below right, font=\footnotesize]{${M_3}$} (1f3);

\draw (1f1) to node[font=\footnotesize]{${M_1}$} (2f1); 
\draw (1f1) to node[font=\footnotesize]{${M_2}$} (2f2); 
\draw (1f1) to node[right, font=\footnotesize]{${M_3}$} (2f3); 

\draw (1f2) to node[font=\footnotesize]{${M_1}$} (2f4); 
\draw (1f2) to node[font=\footnotesize]{${M_2}$} (2f5); 
\draw (1f2) to node[right, font=\footnotesize]{${M_3}$} (2f6); 

\draw (1f3) to node[font=\footnotesize]{${M_1}$} (2f7); 
\draw (1f3) to node[font=\footnotesize]{${M_2}$} (2f8); 
\draw (1f3) to node[right, font=\footnotesize]{${M_3}$} (2f9); 
\end{tikzpicture}
\end{center}

\caption{Berggren trees for Pythagorean triples\label{PythagoreanTree}} 
\end{figure}
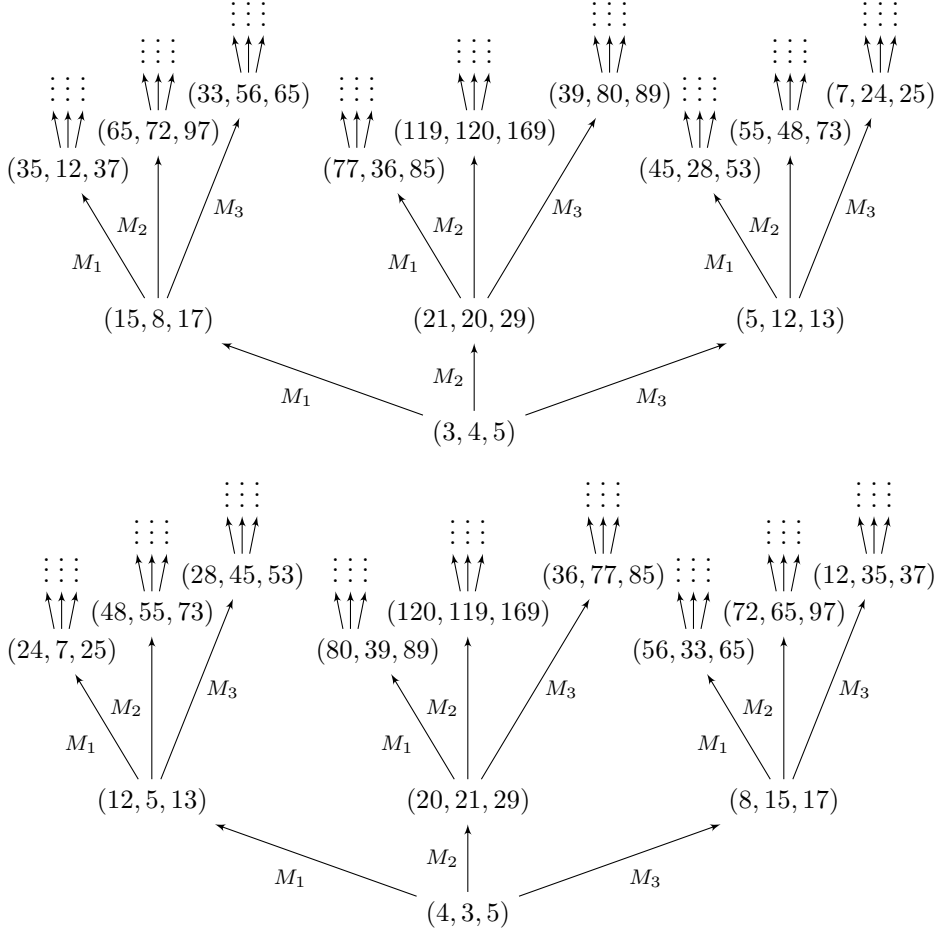
Then the repeated actions of $M_1, M_2, M_3$ via left-multiplication on (the column vectors) $(3, 4, 5)$ and $(4, 3, 5)$ generate all primitive Pythagorean triples  
and each primitive Pythagorean triple shows up in the trees exactly once, as pictured in Figure~\ref{PythagoreanTree}.
As far as we are aware, the oldest literature containing this theorem is the paper \cite{Ber34} by Berggren. 
We will refer the trees in Figure~\ref{PythagoreanTree} as the \emph{Berggren trees}.
See \cite{Alp05} and \cite{Bar63} for the proofs of this theorem and related discussion. 
Also, the paper \cite{Rom08} by Romik contains an extensive list of bibliography on Berggren's theorem.

In the same paper, Romik initiates the investigation of a dynamical system $T: \QQQ \longrightarrow \QQQ$, where $\QQQ$ is the (closed) unit quarter circle
\[
\QQQ = \{ (x, y)\in \mathbb{R}^2 \mid x\ge0, y\ge0 \text{ and }x^2 + y^2 = 1 \},
\]
and $T(x, y)$ is defined to be
\[
T(x, y) = \left(
\frac{|2 - x - 2y|}{3 - 2x - 2y},
\frac{|2 - 2x - y|}{3 - 2x - 2y}
\right).
\]
This dynamical system naturally arises from the Berggren trees in Figure~\ref{PythagoreanTree}. In fact, $T$ is the unique continuous map from $\QQQ$ to itself, which is ``parent-finding'' in the following sense. 
If $(\frac ac, \frac bc)$ is a rational point on $\QQQ$ represented by a primitive Pythagorean triple $(a, b, c)$, then $T(\frac ac, \frac bc) = (\frac{a'}{c'}, \frac{b'}{c'})$, finding the \emph{parent} Pythagorean triple $(a', b', c')$ of $(a, b, c)$ in the Berggren trees.
\begin{figure}
	\begin{center}
		\begin{tikzpicture}[scale=0.8]
			\node[below left] at (0, 0) {$O$};
			\draw[->] (-0.5, 0) -- (5.5, 0) node[right] {$x$};
			\draw[->] (0, -0.5) -- (0, 5.5) node[above] {$y$};
			\draw[very thick] (5, 0) node [below] {$1$}
			arc (0:90:5) node[left] {$1$};

			\draw[fill] (3, 4) circle (0.08) node[below left] {$(\frac35, \frac45)$};
			\draw[fill] (4, 3) circle (0.08) node[below left] {$(\frac45, \frac35)$};
			\draw[<->, dotted, thick] (5.2, 0) 
			arc (0: 36.8:5.2) node[midway, above right] {$d=1$};
			\draw[<->, dotted, thick] ([shift=(36.8:5.2)]0, 0) 
			arc (36.8: 53.1: 5.2) node[midway, above right] {$d=2$};
			\draw[<->, dotted, thick] ([shift=(53.1:5.2)]0, 0) 
			arc (53.1: 90: 5.2) node[midway, above right] {$d=3$} ;
		\end{tikzpicture}
	\caption{Digits of the points on $\QQQ$ \label{DigitPicture}}
	\end{center}
	
\end{figure}
Additionally, Romik assigns a digit $d = d(x, y) \in \{ 1, 2, 3 \}$ for each $(x, y) \in \QQQ$ according to
\begin{equation}\label{DigitDefinition}
d(x, y) = \begin{cases}
1 & \text{ if } \frac45 < x \le 1, \\
2 & \text{ if } \frac35 < x < \frac45, \\
3 & \text{ if } 0\le x < \frac35, \\
\end{cases}
\end{equation}
as in Figure~\ref{DigitPicture}. 
Then the digit expansion of $(x, y)$ is defined to be
\begin{equation}\label{RomikExpansion}
	(x, y) = [d_1, d_2, \dots ]_{\QQQ},
\end{equation}
where $d_j = d(T^{j-1}(x, y))$ for $j=1, 2, \dots.$ We call this the \emph{Romik digit expansion} of $(x, y)$. 
For the ``boundary cases'' $x=\frac35$ and $x =\frac45$, 
we allow both adjacent digits to be valid, instead of using ``oe'' and ``eo'' as in \cite{Rom08}. 
Namely,
\[
d(\tfrac45,\tfrac35) =1 \text{ or } 2, \qquad
d(\tfrac35,\tfrac45) =2 \text{ or } 3. 
\]
Note that $T(\tfrac35,\tfrac45)=(1, 0)$ and
$T(\tfrac45,\tfrac35)=(0, 1)$
are the only fixed points of $T(x, y)$, and their Romik digit expansions are
\[
	(1, 0) = [1^{\infty}]_{\QQQ}:= [1, 1, \dots ]_{\QQQ},
\text{ and }
	(0, 1) = [3^{\infty}]_{\QQQ}:= [3, 3, \dots ]_{\QQQ}.
\]
As a consequence, Theorem 2 in \cite{Rom08} says that a point $(x, y) \in \QQQ$ is rational if and only if
its Romik digit expansion ends with either $1^{\infty}$ or $3^{\infty}$. 
Moreover, 
except for $(1, 0)$ and $(0, 1)$, 
every rational point $(x, y)$ has \emph{two} endings in its Romik digit expansions
\[
	(x, y) = [d_1, d_2, \dots, 2, 1^{\infty}]_{\QQQ} \text{ and }  [d_1, d_2, \dots, 3, 1^{\infty}]_{\QQQ},
\]
or
\[
	(x, y) = [d_1, d_2, \dots, 2,  3^{\infty}]_{\QQQ} \text{ and }  [d_1, d_2, \dots, 1, 3^{\infty}]_{\QQQ}.
\]

The Romik digit expansion of a point on $\QQQ$ resembles classical (simple) continued fraction expansions of real numbers. 
In fact, as Romik explains in \cite{Rom08}, the dynamical system $(\QQQ, T)$ is conjugate to
a dynamical system on the unit interval $[0, 1]$ constructed from, what he calls, a modified Euclidean algorithm. 

Does $(\QQQ, T)$ exhibit any number theoretical property shared by the dynamical system associated with the classical continued fraction? 
For example, Romik asks in \cite{Rom08} if an analogue of Lagrange's theorem for $(\QQQ, T)$ is true. 
That is, is it true that a point $(x, y)\in\QQQ$ has an eventually periodic Romik digit expansion if and only if $(x, y)$ is defined over a (real) quadratic extension of $\mathbb{Q}$?
The main result in our paper answers this question affirmatively. 
Our proof leverages the structure of a \emph{quadratic space} $(\mathbb{R}^3, Q(\mathbf{x}))$ where $Q(\mathbf{x}) = x_1^2 + x_2^2 - x_3^2$ is the Pythagorean quadratic form. In this sense, the present paper is a natural continuation of the work \cite{CNT} by Cha, Nguyen and Tauber. 

Perhaps not surprisingly, our proof of the Lagrange theorem is reminiscent of Lagrange's original proof of his theorem for the usual continued fraction expansion.
The heart of Lagrange's classical proof is to show that the discriminants of defining equations of the irrationals with the same tails in their continued fraction expansions lie in a bounded subset of $\mathbb{R}$.
In our case, a key step is to show that the \emph{$Q$-cross products} (see \S\ref{SectionTwistedProduct}) arising from the vectors in the same Romik $T$-orbit lie in a bounded subset of $\mathbb{R}^3$.

We point out here that one can deduce Lagrange's theorem for the Romik system as a corollary from another general theorem, namely, Panti's theorem in \cite{Pan09}. 
Panti proves a version of Lagrange's theorem which is applicable to many systems arising from unimodular partitions of the unit interval. 
The aforementioned dynamical system on the unit interval to which $(\QQQ, T)$ is conjugate is an example of such a system. 
Thus, Panti's theorem in \cite{Pan09} can be applied in this context to prove the Lagrange theorem for $(\QQQ, T)$.  
After a version of this paper had been completed, we recently learned from \cite{Pan19} that Panti proved in the Lagrange and Galois theorems for, what he calls, all billiard maps based on unimodular partitions. 

Even though Panti's theorem in \cite{Pan09} gives a simpler proof for the Lagrange theorem for $(\QQQ, T)$ it seems to us that the tools we develop in the present paper are well-suited to the study of other number theoretical properties regarding $(\QQQ, T)$. 
In particular, our set-up for the Romik system can provide flexible tools in the study of \emph{intrinsic Diophantine approximation}, following Kleinbock, Merril, and their collaborators in \cite{KM15} and \cite{FKMS}. 
The authors intend to pursue this in a follow-up work in the near future.

The rest of this paper is organized as follows. In \S\ref{SecReview}, we set up notations and recall some background materials on quadratic spaces, as well as prior results mainly from \cite{Con} and \cite{CNT}. 
We review the geometric construction from \cite{CNT} and provide a self-contained proof of Berggren's theorem, which is essentially due to Conrad \cite{Con}. 
There are already many proofs available in the literature for Berggren's theorem. 
However, our presentation provides a convenient starting point for our discussion later.
The main result of the paper is contained in \S\ref{SecProof}, where we complete the proofs of Lagrange's and Galois' theorems for the Romik system. 
Some open questions and potential future developments are described in \S\ref{SecOpen}.

\section{Background materials and review on the Romik system}\label{SecReview}
\subsection{Notational convention and quadratic spaces}\label{SecNotation}
We will use bold-faced letters, such as $\mathbf{v}, \mathbf{w}$, etc., to denote vectors in $\mathbb{R}^3$. They are regarded as column vectors, so that a $3\times 3$ matrix acts on them via left-multiplication. 
Points on $\mathbb{R}^2$ are usually denoted by capital letters, such as $P$.
We write $x_1, x_2, x_3$ for the standard coordinate functions in $\mathbb{R}^3$ and we use $x$ and $y$ for $\mathbb{R}^2$. 
Define the unit circle
\[
	\UUU = \{ (x, y) \in \mathbb{R}^2 \mid x^2 + y^2 = 1 \} 
\]
and the quarter circle
\[
	\QQQ = \{ (x, y) \in \mathbb{R}^2 \mid x, y\ge 0 \text{ and }x^2 + y^2 = 1  \}.
\]
Also, we define a projection map $\pi$ to be
\begin{equation}\label{ProjectionMap}
	\pi: \mathbb{R}^3 - \{ x_3 = 0 \} \longrightarrow \mathbb{R}^2, \qquad
(x_1, x_2, x_3) \mapsto (x, y) := (x_1/x_3, x_2/x_3).
\end{equation}
We will say that $\mathbf{v}$ \emph{represents} $P$ when $\pi(\mathbf{v}) = P$.

Let 
\[
	Q(\mathbf{x}) = x_1^2 + x_2^2 - x_3^2,
\]
which we will refer as the Pythagorean quadratic form on $\mathbb{R}^3$. 
However, many of our results here can be applied with some minimal modifications to other quadratic forms $Q(\mathbf{x})$ satisfying certain technical conditions,
the conditions ($Q$-I)---($Q$-III) in \cite{CNT}, to be precise. 
With such other cases in mind, we will try to present our arguments in the form that can be generalized easily in the future.
Recall that a \emph{quadratic space} ($\mathbb{R}^3, Q(\mathbf{x})$) is a vector space $\mathbb{R}^3$ equipped with a quadratic form $Q(\mathbf{x})$ on $\mathbb{R}^3$. 
It is well-known that the set of all quadratic forms $Q(\mathbf{x})$ is in one-to-one correspondence with the set of all symmetric bilinear forms on $\mathbb{R}^3$, as well as that of all $3\times 3$ symmetric matrices with real coefficients.
In general, the relationship among an arbitrary $Q(\mathbf{x})$, the corresponding bilinear form, and $M_Q$ is given by
\begin{equation}\label{TripleCorrespondence}
	\langle \mathbf{x}, \mathbf{y} \rangle = 
	\frac12(Q(\mathbf{x} + \mathbf{y}) - Q(\mathbf{x}) - Q(\mathbf{y})) 
	=
	\mathbf{x}^TM_Q\mathbf{y},
\end{equation}
for all $\mathbf{x}, \mathbf{y}$ in $\mathbb{R}^3$.

In our case of the Pythagorean form, the corresponding bilinear form is
\[ 
\langle \mathbf{x}, \mathbf{y} \rangle
=
x_1y_1 + x_2y_2 - x_3y_3,
\]
where $\mathbf{x} = (x_1, x_2, x_3)$ and $\mathbf{y} = (y_1, y_2, y_3)$, and
\[
M_Q 
=
\begin{pmatrix} 1 & 0 & 0 \\ 0 & 1 & 0 \\ 0 & 0 & -1\\
\end{pmatrix}.
\]

Define 
\[
C_Q = \{ \mathbf{x}\in\mathbb{R}^3 \mid Q(\mathbf{x}) = 0 \},
\]
to be the set of all $Q$-null vectors in $\mathbb{R}^3$. 
Also, if a linear map $A$ on $\mathbb{R}^3$ to itself preserves $Q(\mathbf{x})$, that is, $Q(A\mathbf{x}) = Q(\mathbf{x})$ for all $\mathbf{x}\in \mathbb{R}^3$, then we say that $A$ is \emph{orthogonal with respect to $Q(\mathbf{x})$}. 
The group of all orthogonal maps is denoted by $O_Q(\mathbb{R})$. Clearly, every $A\in O_Q(\mathbb{R})$ maps $C_Q$ to itself.  

\subsection{Reflections and their actions on $\UUU$ and $C_Q$}\label{SecReflection}
Fix $\mathbf{z}\in\mathbb{R}^3$ with $Q(\mathbf{z})\neq 0$. 
The \emph{reflection $s_{\mathbf{z}}:\mathbb{R}^3 \longrightarrow \mathbb{R}^3$ of} $\mathbf{z}$ is defined to be
\begin{equation}\label{Reflection}
s_{\mathbf{z}}(\mathbf{x}) = 
\mathbf{x} -
2
\frac{\langle \mathbf{x}, \mathbf{z} \rangle}{Q(\mathbf{z})} \mathbf{z}.
\end{equation}
It is easy to prove that $s_{\mathbf{z}} \in O_Q(\mathbb{R})$.
Also, $s_{\mathbf{z}}^2 = \mathbf{1}_{\mathbb{R}^3}$, the identity map on $\mathbb{R}^3$, so that $s_{\mathbf{z}}$ is its own inverse. 
In addition, $\det(s_{\mathbf{z}}) = -1$ because $\mathbf{z}$ is an eigenvector of $s_{\mathbf{z}}$ with the eigenvalue $-1$, while 
\[
	\{ \mathbf{x} \in \mathbb{R}^3 \mid
	\langle \mathbf{x}, \mathbf{z} \rangle = 0 \}
\]
is a two-dimensional eigenspace with the eigenvalue $1$. 

Define $U_1, U_2, U_3$ by
\begin{equation}\label{DefinitionUj}
U_1 = \begin{pmatrix}
1 & 0 & 0 \\
0 & -1 & 0 \\
0 & 0 & 1 \\
\end{pmatrix},
\quad
U_2 = \begin{pmatrix}
-1 & 0 & 0 \\
0 & -1 & 0 \\
0 & 0 & 1 \\
\end{pmatrix},
\quad
U_3 = \begin{pmatrix}
-1 & 0 & 0 \\
0 & 1 & 0 \\
0 & 0 & 1 \\
\end{pmatrix}.
\end{equation}
Then it is easy to show that $U_1=s_{(0, 1, 0)}$ and $U_3=s_{(1, 0, 0)}$ (with the matrices being understood via the standard basis of $\mathbb{R}^3$.)
Also, $U_2 = U_1U_3$ is the composite of the two reflections $s_{(0,1, 0)}$ and $s_{(1, 0, 0)}$.
Next, let 
\begin{equation}\label{DefinitionH}
H = 
s_{(1, 1, 1)}
=
\begin{pmatrix}
-1 & -2 & 2 \\
-2 & -1 & 2 \\
-2 & -2 & 3 \\
\end{pmatrix}.
\end{equation}
Then, the matrices $M_1, M_2, M_3$ in \eqref{Ms} are 
\begin{equation}\label{MandHU}
M_1 = HU_1, \quad
M_2 = HU_2, \quad
\text{ and }
\quad
M_3 = HU_3,
\end{equation}
as first noted by Conrad in \cite{Con} and by Berggren \cite{Ber34} in a more indirect way.
Equivalently,
\begin{equation}\label{MInverseandHU}
	M_1^{-1} = U_1H, \quad
	M_2 ^{-1}= U_2H \quad
\text{ and }
\quad
M_3 ^{-1}= U_3H.
\end{equation}

\begin{figure}
\begin{center}
\begin{tikzpicture}
  \matrix (m) [matrix of math nodes,row sep=3em,column sep=4em,minimum width=2em]
  {
     C_Q & C_Q \\
     \UUU & \UUU \\};
  \path[-stealth]
    (m-1-1) edge node [left] {$\pi$} (m-2-1)
            edge node [above] {$M$} (m-1-2)
    (m-2-1.east|-m-2-2) edge node [below] {$M\cdot \underline{\hspace{1em}}$}
            node [above] {$$} (m-2-2)
    (m-1-2) edge node [right] {$\pi$} (m-2-2);
\end{tikzpicture}
\qquad
\begin{tikzpicture}
  \matrix (m) [matrix of math nodes,row sep=3em,column sep=4em,minimum width=2em]
  {
     \mathbf{v} & M\mathbf{v} \\
     P & M\cdot P \\};
  \path[-stealth]
    (m-1-1) edge [|->]  (m-2-1)
            edge [|->] (m-1-2)
    (m-2-1.east|-m-2-2) edge [|->] node [below] {\phantom{$M\cdot \underline{\hspace{1em}}$}} (m-2-2)
    (m-1-2) edge [|->] (m-2-2);
\end{tikzpicture}
\end{center}
\caption{Action of $M$ on $C_Q$ and $\UUU$ \label{ActionofM}}
\end{figure}
Central to us is to describe geometrically the actions of $U_1, U_2, U_3, H$ on $C_Q$, 
as well as their induced actions on $\UUU$, whose meaning we now make precise below.
Suppose that $M$ is any one of $U_1, U_2, U_3,$ and $H$, or a finite product of them. 
If $P$ is in $\UUU$ then we denote by $M\cdot P$ the point represented by $M\mathbf{v}$ when $\mathbf{v}$ is any vector representing $P$.
This is summarized in Figure~\ref{ActionofM}.

Apply this for $M = U_j$. Then we obtain from \eqref{DefinitionUj} that
\begin{align*}
	U_1\cdot(x, y) &= (x, -y), \\
	U_2\cdot(x, y) &= (-x, -y), \\
	U_3\cdot(x, y) &= (-x, y), \\
\end{align*}
for any $(x, y) \in \UUU$.
As for $M=H$, we compute $H\mathbf{v}$ using \eqref{DefinitionH} for a few $\mathbf{v}$'s. The results are summarized in the table below.
\[
	\begin{array}{ll}
	\toprule
		\mathbf{v} & H\mathbf{v} \\
	\midrule
	(0, 1, 1) & (0, 1, 1)  \\
	(3, 4, 5) & (-1, 0, 1)  \\
	(4, 3, 5) & (0, -1, 1)  \\
	(1, 0, 1) & (1, 0, 1)  \\
	\bottomrule
	\end{array}
\]
\begin{figure}
\begin{center}
	\begin{tikzpicture}[scale=2.5, >=latex', auto]
		\draw[->](-1.2, 0) -- (1.5, 0) node[right]{$x$};
		\draw[->](0, -1.2) -- (0, 1.5) node[above]{$y$};
		\draw (0, 0) circle (1);
		\draw[ultra thick] (1, 0) arc (0:90:1);

		\draw[fill] (0.8, 0.6) circle (0.02) node[above right] {$(\frac45, \frac35)$};
		\draw[fill] (0.6, 0.8) circle (0.02) node[above right] {$(\frac35, \frac45)$};

		\draw[fill] (0, 1) circle (0.02) node[above right] {$1$};
		\draw[fill] (1, 0) circle (0.02) node[above right] {$1$};

		\draw[fill] (0, -1) circle (0.02) node[below left ] {$-1$};
		\draw[fill] (-1, 0) circle (0.02) node[below left ] {$-1$};

		\draw[<->, dashed] (0.8, 0.6) -- (0, -1);
		\draw[<->, dashed] (0.6, 0.8) -- (-1, 0);

		\draw[dotted] (-0.3, 1.3) -- (1.3, -0.3) node[below right]{$\ell_1$};
		\draw[dotted] (0.5, 1.5) -- (1.5, 0.5) node[below right]{$\ell_2$};
\end{tikzpicture}
\end{center}
\caption{Action of $H$ on $\UUU$ \label{ActionofH}}
\end{figure}
Remembering that $H$ is a reflection, thus of order 2, 
we see that $H$ sends the subarc of $\QQQ$ between $(1, 0)$ and $(\frac45, \frac35)$ onto the quarter circle in the fourth quadrant, and vice versa. 
Recall from the definition of $d(x, y)$ in \eqref{DigitDefinition} and in Figure~\ref{DigitPicture} that
this arc is precisely the set of $(x, y)$ whose Romik digit $d(x, y) = 1$. 
Likewise, $H$ moves back and forth the subarc of $\QQQ$ with $d(x, y) = 2$ onto the quarter circle in the third quadrant.
Similarly, the subarc with $d(x, y) = 3$ is mapped by $H$ onto the quarter circle in the second quadrant.
This gives a geometric description of the actions of $H$ and $U_j$'s on $\UUU$, as is pictured in Figure~\ref{ActionofH}.

We now consider the actions of $H$ and $U_j$'s on $C_Q$. 
As we understand their actions on $\UUU$ it is enough to know the $x_3$-coordinates of $H\mathbf{v}$ and $U_j \mathbf{v}$. 
First of all, $U_j$ obviously preserves the $x_3$-coordinates. 
As for $H$, we use the following proposition.
\begin{proposition}\label{EmilyLemma}
	Let $\mathbf{v} = (a, b, c)$ (not necessarily in $C_Q$) with $c\neq0$ and write $H\mathbf{v}=(a', b', c')$. Then
	\begin{equation}\label{ConditionDecrease}
       1< \frac ac + \frac bc <2,
	\end{equation}
	if and only if $|c'| < |c|$.
\end{proposition}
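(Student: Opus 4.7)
The plan is essentially a direct computation: read off the third row of $H$ from \eqref{DefinitionH} to get a clean formula for $c'$ in terms of $a,b,c$, and then rearrange the inequality $|c'|<|c|$ algebraically.

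First I would compute
\[
c' = -2a - 2b + 3c = c\Bigl(3 - 2\tfrac{a}{c} - 2\tfrac{b}{c}\Bigr),
\]
which uses only the bottom row of the matrix $H$ defined in \eqref{DefinitionH}. Since $c\neq 0$, the inequality $|c'|<|c|$ is equivalent, after dividing both sides by $|c|$, to
\[
\bigl|3 - 2\tfrac{a}{c} - 2\tfrac{b}{c}\bigr| < 1.
\]
Setting $u = \frac{a}{c}+\frac{b}{c}$, this says $|3-2u|<1$, i.e.\ $-1 < 3-2u < 1$, which rearranges to $1 < u < 2$. That is exactly the hypothesis \eqref{ConditionDecrease}.

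Both implications follow simultaneously from this chain of equivalences, so no separate \emph{only if} argument is needed. The one small subtlety, which I would flag but not belabor, is that the step ``divide by $|c|$'' goes through regardless of the sign of $c$: the equivalence $|c'|<|c| \iff |3-2u|<1$ does not require $c>0$, because we are dividing an absolute value by a positive quantity. There is no real obstacle here; the statement is essentially a one-line calculation, and its role in the paper is presumably to serve as a contraction lemma that guarantees the $x_3$-coordinate strictly decreases under $H$ on vectors whose projection lies in the relevant arc of $\QQQ$.
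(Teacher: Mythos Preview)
Your proof is correct and essentially the same as the paper's: both start from $c' = -2a - 2b + 3c$ read off the third row of $H$, and then do a one-line algebraic equivalence. The paper compares $c'^2 - c^2 = 4c^2\bigl(\tfrac{a}{c}+\tfrac{b}{c}-1\bigr)\bigl(\tfrac{a}{c}+\tfrac{b}{c}-2\bigr)$ rather than dividing by $|c|$ directly, but this is a cosmetic variant, not a different idea.
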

\begin{proof}
A straightforward calculation shows $c'=-2a - 2b +3c$. 
The proof then follows immediately from the identity
\begin{align*}
	c'^2 - c^2  &= (-2a -2b +3c)^2 - c^2\\ 
       &= 4c^2 \left( \frac ac + \frac bc - 1\right)\left( \frac ac + \frac bc - 2 \right).
\end{align*}
\end{proof}
Note that the condition \eqref{ConditionDecrease} is equivalent to saying that $\mathbf{v}$ represents a point $(x, y)$ in $\mathbb{R}^2$
between the two lines
\[
	\ell_1: x + y = 1 \quad \text{ and }\quad \ell_2:x + y = 2,
\]
as depicted in Figure~\ref{ActionofH}.

Now, suppose $\mathbf{v}=(a, b, c)\in C_Q$ and write $H\mathbf{v} = (a', b', c')$. 
If $\mathbf{v}$ represents a point $P$ in (the interior of) $\QQQ$, then $\mathbf{v}$ satisfies the condition \eqref{ConditionDecrease}, therefore,
$|c'| < |c|$.
In other words, $H$ brings $\mathbf{v}$ closer to the origin if $P\in \QQQ$.
Likewise, while $H$ moves $\mathbf{v}$ away from the origin if $P\in \UUU-\QQQ$.

As a final note, we claim here that, if $c > 0$ for any $\mathbf{v} = (a, b, c) \in C_Q$, then $c'>0$ as well.
To prove this, define
\[
	C_Q^+ = \{ (a, b, c) \in C_Q \mid c > 0 \}
\]
and 
\[
	C_Q^- = \{ (a, b, c) \in C_Q \mid c < 0 \}.
\]
Note that $C_Q^+$ and $C_Q^-$ are disjoint and connected, and that $H$ is a continuous invertible map leaving stable the set
\[
	C_Q - \{\mathbf{0}\} = C_Q^+ \cup C_Q^-.
\]
So, $H$ must leave stable each of $C_Q^+$ and $C_Q^-$ separately, or $H$ must swap them.
But, the second is impossible because $H$ fixes $(1, 0, 1) \in C_Q^+$.
So, $H$ must leave stable each of $C_Q^{\pm}$ separately.

\begin{figure}
\begin{center}
	\begin{tikzpicture}[scale=2, >=latex', auto]
		\draw[->](-1.2, 0) -- (1.2, 0) node[right]{$x$};
		\draw[->](0, -1.2) -- (0, 1.2) node[above]{$y$};
		\draw (0, 0) circle (1);
		\draw[ultra thick] (1, 0) arc (0:90:1);

		\draw[fill] (0.8, 0.6) circle (0.02) node[above right] {$(x, y)$};
		\draw[fill] (-0.8, 0.6) circle (0.02);
		\draw[fill] (-0.8, -0.6) circle (0.02);
		\draw[fill] (0.8, -0.6) circle (0.02);

		\draw[->, dashed] (0.8, 0.6) -- (-0.8, 0.6) node[midway, below right] {$U_3$} node[left] {$P_3$};
		\draw[->, dashed] (0.8, 0.6) -- (-0.8, -0.6) node[midway, below right] {$U_2$} node[below left]{$P_2$};
		\draw[->, dashed] (0.8, 0.6) -- (0.8, -0.6) node[midway, below left] {$U_1$} node[below]{$P_1$};

\end{tikzpicture}
	\begin{tikzpicture}[scale=2, >=latex', auto]
		\draw[->](-1.2, 0) -- (1.2, 0) node[right]{$x$};
		\draw[->](0, -1.2) -- (0, 1.2) node[above]{$y$};
		\draw (0, 0) circle (1);
		\draw[ultra thick] (1, 0) arc (0:90:1);

		\draw[fill, dashed, ->] (-0.8, 0.6) node[left]{$P_3$} circle (0.025) -- (0.47, 0.88) circle (0.025) node[above right]{$M_3\cdot P$};
		\draw[fill, dashed, ->] (-0.8, -0.6) node[below left]{$P_2$} circle (0.025) -- (0.69, 0.72) circle (0.025) 
		node[midway, above left] {$H$} node[right]{$M_2\cdot P$};
		\draw[fill, dashed, ->] (0.8, -0.6) node[below]{$P_1$} circle (0.025) -- (0.92, 0.38) circle (0.025) node[right]{$M_1\cdot P$};

\end{tikzpicture}
\end{center}
\caption{Actions of $M_1, M_2, M_3$ on $(x, y)\in \QQQ$. First, $(x, y)$ moves to $P_1, P_2, P_3$ in the 4nd, 3rd, 2th quadrants under $U_1, U_2, U_3$, respectively.
These points are then brought back to $\QQQ$ under $H$.\label{ActionofMj}}
\end{figure}
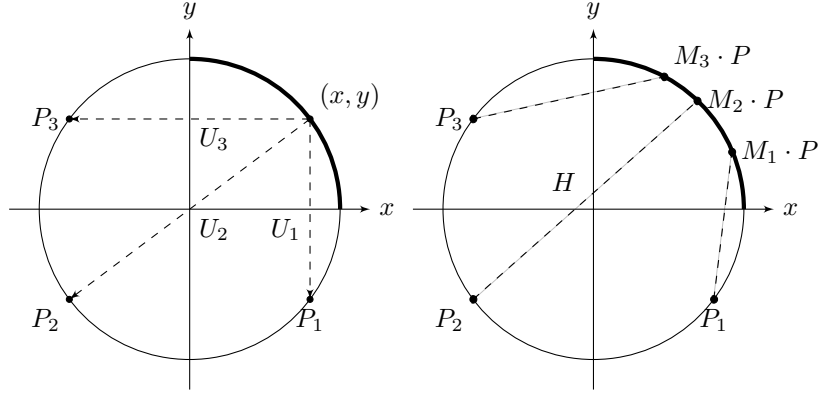

\subsection{Proof of Berggren's theorem}\label{SecBerggren}
Equipped with this geometric insight, we are now ready to prove Berggren's theorem. First, define a directed graph as follows:
\begin{description}
	\item[Vertex] the vertices are all primitive Pythagorean triples $\mathbf{v} = (a, b, c)$, and
	\item[Edge] there is an edge from $\mathbf{v} = (a, b, c)$ (which we regard as a \emph{parent}) to $\mathbf{v}' = (a', b', c')$ (a \emph{child}) whenever $\mathbf{v}'= M_j\mathbf{v}$ for $j=1, 2$, or $3$.
\end{description}
We show first that each vertex $\mathbf{v}$ has outdegree 3, in other words, all 
\[
	M_j\mathbf{v} = HU_j \mathbf{v}
\] 
for $j = 1, 2, 3$ are primitive Pythagorean triples. 
First, note that $U_j\mathbf{v}\in C_Q^+$ and $H$ leaves $C_Q^+$ stable. So, $M_j\mathbf{v}\in C_Q^+$.
We leave as an exercise for the reader to verify that, if $A\in O_Q(\mathbb{R})$ is \emph{integral}, namely, if $A\mathbf{w}\in \mathbb{Z}^3$ for any $\mathbf{w} = (w_1, w_2, w_3)\in \mathbb{Z}^3$ and if $\gcd(w_1, w_2, w_3) = 1$, then the gcd of the three coordinates of $A\mathbf{w}$ is one, as well.

Now, it remains to show $HU_j\mathbf{v}$ represents a point in $\QQQ$.
Clearly, the vector $U_j\mathbf{v}$ represents a point, say, $P_j$, which is not in $\QQQ$.
Then $H$ moves $P_j$ back to one of the three subarcs of $\QQQ$ (corresponding to the three Romik digits $d=1, 2, 3$), as described in as Figure~\ref{ActionofMj}.
This finishes proving the claim that $\mathbf{v}$ has outdegree 3.

\begin{figure}
\begin{center}
	\begin{tikzpicture}[scale=2, >=latex', auto]
		\draw[->](-1.2, 0) -- (1.2, 0) node[right]{$x$};
		\draw[->](0, -1.2) -- (0, 1.2) node[above]{$y$};
		\draw (0, 0) circle (1);
		\draw[ultra thick] (1, 0) arc (0:90:1);

		\draw[fill, dashed, ->] (0.47, 0.88) circle (0.02) node[above right] {$P$}
		-- (-0.8, 0.6) circle (0.02) node[midway, below left] {$H$}
		node[left] {$P_H$};

		
\end{tikzpicture}
	\begin{tikzpicture}[scale=2, >=latex', auto]
		\draw[->](-1.2, 0) -- (1.2, 0) node[right]{$x$};
		\draw[->](0, -1.2) -- (0, 1.2) node[above]{$y$};
		\draw (0, 0) circle (1);
		\draw[ultra thick] (1, 0) arc (0:90:1);

		\draw[fill, dashed, ->] (-0.8, 0.6) circle (0.02) node[left] {$P_H$}
		-- (0.8, 0.6) circle (0.02) node[below right, midway] {$U_j$}
		node[right] {$M_j^{-1}\cdot P$};

\end{tikzpicture}
\end{center}
\caption{Action of $M_j^{-1}$ on $P\in \QQQ$. First, $H$ moves $P$ outside of $\QQQ$ to $P_H$. Then exactly one of $U_1, U_2, U_3$, namely, $U_j$ with $j=d(x, y)$ brings it back to $\QQQ$. \label{ActionofMjInverse}}
\end{figure}
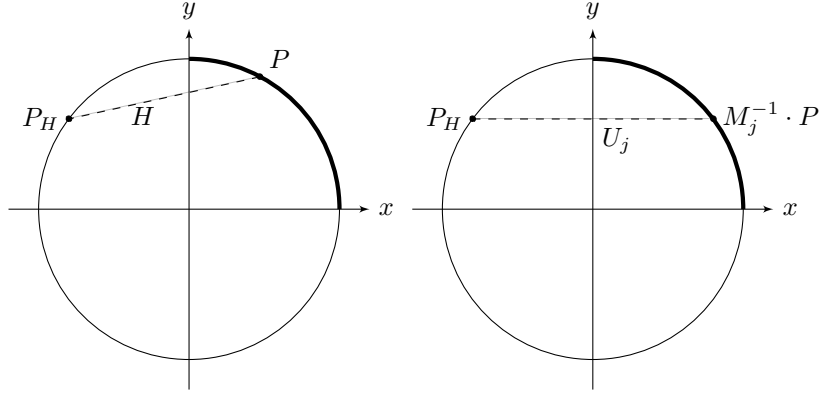

Next, suppose $\mathbf{v}$ is a vertex not equal to $(3, 4, 5)$ and $(4, 3, 5)$.
We argue that $\mathbf{v} = (a, b, c)$ has indegree 1, or, equivalently, 
\[
	M_j^{-1}\mathbf{v} = U_jH\mathbf{v}
\]
is a primitive Pythagorean triple for exactly one value of $j=1, 2, 3$.
In fact, if we let $j = d(x, y)$, the Romik digit of $P = (x, y)$ with $x=\frac ac, y=\frac bc$ (namely, $P$ is the point represented by $\mathbf{v}$),
then the point $P_H$ represented by $H\mathbf{v}$ is on the second, third or fourth quadrant, depending on $j=1, 2, 3$, respectively. 
Then, $U_j$ moves $P_H$ back to $\QQQ$, as in Figure~\ref{ActionofMjInverse}. 
For the exceptional case $\mathbf{v} = (3, 4, 5)$ and $(4, 3, 5)$, we have $U_jH\mathbf{v} = (1, 0, 1)$ and $(0, 1, 1)$, which are not (technically) primitive Pythagorean triples.
So, $(3, 4, 5)$ and $(4, 3, 5)$ are the only vertices of indegree 0.
\begin{figure}
\begin{center}
	
\tikzset{triarrow/.pic={
    \draw (0, 0) -- (0, 0.5) node[above]{$\vdots$};
    \draw (-0.1, 0) -- (-0.2, 0.5) node[above]{$\vdots$};
    \draw (0.1, 0) -- (0.2, 0.5) node[above]{$\vdots$};
  }
}
\begin{tikzpicture}[->, >=latex', auto]
\node (base1) at (.5, -1) {$(3, 4, 5)$};

\node (1f1) at (-1, 1) {$\bullet$};
\node (1f2) at (1, 1) {$\bullet$};
\node (1f3) at (3, 1) {$\bullet$};

\pic at (-1, 1.3) {triarrow};
\pic at (0, 1.3) {triarrow};
\pic at (1, 1.3) {triarrow};

\draw (base1) to (1f1);
\draw (base1) to (1f2);
\draw (base1) to (1f3);

\node (base2) at (2.5, -1) {$(4, 3, 5)$};

\node (2f1) at (0, 1) {$\bullet$};
\node (2f2) at (2, 1) {$\bullet$};
\node (2f3) at (4, 1) {$\bullet$};

\pic at (2, 1.3) {triarrow};
\pic at (3, 1.3) {triarrow};
\pic at (4, 1.3) {triarrow};

\draw (base2) to (2f1);
\draw (base2) to (2f2);
\draw (base2) to (2f3);

\draw (-2.5, 2.5) to[bend left] (5.5, 2.5) -- (1.5, -4) -- cycle;

\draw[dashed] (-2, -1) -- (-2, -3) node[midway, left]{$M_j^{-1}$};
\draw[dashed] (4, -3) -- (4, -1) node[midway, right]{$ M_1, \, \, M_2, \, \, M_3$};
\end{tikzpicture}

%
%
\end{center}
      \caption{The vertices are drawn on $C_Q^+$. Except for $(3, 4, 5)$ and $(4, 3, 5)$, each vertex is of outdegree 3 and of indegree 0. The actions of $M_1, M_2, M_3$ move the vertices upwards, increasing the $x_3$-coordinates, while $M_j^{-1}$ move them downwards, decreasing the $x_3$-coordinates.\label{FunnelVertex}}
\end{figure}
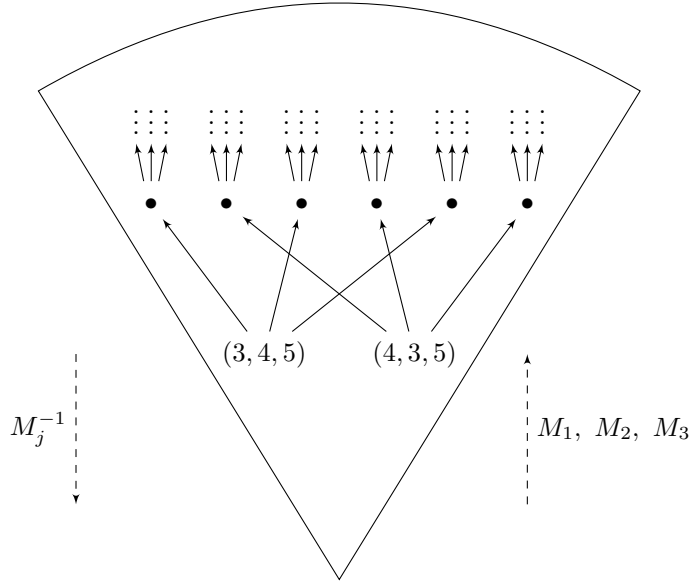

Lastly, we claim that the actions of $M_1, M_2, M_3$ on a vertex $\mathbf{v} =(a, b, c)$ increase its $x_3$-coordinate, while the actions of $M_j^{-1}$ decrease it.
For any $j$, let $(a', b', c') = M_j\mathbf{v} = U_jH\mathbf{v}$.
Since $\mathbf{v}$ represents a point in $\QQQ$ we see that $H\mathbf{v}$ has greater $x_3$-coordinate than $\mathbf{v}$, as was observed before. 
Also, $U_j$ always preserves the $x_3$-coordinates. This proves $c'>c$. 
Additionally, we note $\mathbf{v} = M_j^{-1}(a', b', c')$ (when $j$ is the Romik digit of the point represented by $(a', b', c')$.) This proves the claim. 

To summarize, we proved that each primitive Pythagorean triple $(a, b, c)$ (except for $(3, 4, 5)$ and $(4, 3, 5)$) satisfies the three properties 
\begin{itemize}
	\item $(a, b, c)$ has outdegree 3,
	\item $(a, b, c)$ has indegree 1, 
	\item any vertex coming out of $(a, b, c)$ has its $x_3$-coordinate greater than $c$, while the vertex going into $(a, b, c)$ has a smaller $x_3$-coordinate than $c$.
\end{itemize}

We will say that a vertex $(a, b, c)$ is a \emph{funnel} vertex, if it satisfies the above three properties.
The idea behind this terminology is this. 
Suppose $P\in \QQQ$. 
If we trace the vertices $\mathbf{v}$ on $C_Q^+$ representing the points $P$, $T(P)$, $T^2(P), \dots,$ then we must move from a vertex to another along the \emph{opposite} direction in the tree.
So, at any vertex (except for $(3, 4, 5)$ and $(4, 3, 5)$), a $T$-orbit always moves \emph{towards} the origin.

What we proved so far is summarized as
\begin{theorem}\label{BerggrenTheorem}
	All primitive Pythagorean triples $(a, b, c)$ are funnel vertices, except for $(3, 4, 5)$ and $(4, 3, 5)$.
\end{theorem}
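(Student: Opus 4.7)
The statement collects the three properties defining a funnel vertex, each of which has already been set up geometrically in \S\ref{SecReflection}. The plan is to assemble them in order; the only point requiring extra care is the treatment of the two exceptional triples in the indegree argument.

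For outdegree $3$, I would check that for any primitive Pythagorean triple $\mathbf{v}$ and any $j\in\{1,2,3\}$, the vector $M_j\mathbf{v}=HU_j\mathbf{v}$ is again a primitive Pythagorean triple. Membership in $C_Q^+$ is automatic since $U_j$ preserves $x_3$, $H$ stabilizes $C_Q^+$, and both maps are integral; coprimality is preserved by any integral element of $O_Q(\mathbb{R})$, since its inverse is also integral. Figure~\ref{ActionofMj} then confirms that the resulting vector represents a point in $\QQQ$.

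For indegree $1$ away from $(3,4,5)$ and $(4,3,5)$, I would let $\mathbf{v}$ represent $P\in\QQQ$ with Romik digit $d=d(P)$. The action of $H$ on $\UUU$ pictured in Figure~\ref{ActionofH} sends $P$ to a point $P_H$ lying in the quadrant uniquely determined by $d$, and the unique $U_j$ carrying $P_H$ back to $\QQQ$ is $U_d$. So $M_d^{-1}\mathbf{v}=U_dH\mathbf{v}$ is the only predecessor in the graph; for $j\neq d$, $U_jH\mathbf{v}$ leaves the first quadrant and fails to be a primitive Pythagorean triple. For the two exceptions I would compute directly from the table in \S\ref{SecReflection}: $H(3,4,5)=(-1,0,1)$ and $H(4,3,5)=(0,-1,1)$, whose $U_j$-reflections into the first octant give $(1,0,1)$ and $(0,1,1)$, neither of which is a primitive Pythagorean triple. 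Hence these two vertices have indegree $0$, and this is the only delicate moment in the argument.

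The monotonicity of $x_3$ under $M_j$ and $M_j^{-1}$ is then an immediate corollary of Proposition~\ref{EmilyLemma}. Since $U_j$ fixes $x_3$, I only need to track the effect of $H$. Applied to $U_j\mathbf{v}$, which represents a point in $\UUU-\QQQ$ whose coordinate sum is strictly less than $1$, the identity $c'^2-c^2 = 4c^2(x+y-1)(x+y-2)$ yields $c'>c$ when passing from $\mathbf{v}$ to $M_j\mathbf{v}$; the opposite inequality for $M_j^{-1}=U_jH$ follows by applying the identity to $\mathbf{v}$ itself, whose coordinate sum lies in $(1,2)$. These three properties are precisely the funnel definition, and the theorem follows.
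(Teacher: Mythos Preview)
Your proposal is correct and follows essentially the same route as the paper: you verify outdegree~$3$ via $M_j=HU_j$ and the stability of $C_Q^+$, indegree~$1$ via the Romik digit $d(P)$ and Figure~\ref{ActionofMjInverse}, handle the two exceptions by direct computation, and deduce the $x_3$-monotonicity from Proposition~\ref{EmilyLemma}. If anything, your monotonicity paragraph is cleaner than the paper's, which contains a typo (writing $M_j\mathbf{v}=U_jH\mathbf{v}$ where $HU_j\mathbf{v}$ is meant) that your version avoids.
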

Now, Berggren's theorem is an easy corollary to this, because any given primitive Pythagorean triple $(a, b, c)$, under the successive actions of $M_j^{-1}$, will funnel down until it reaches $(3, 4, 5)$ or $(4, 3, 5)$ and it must do so in a unique way.

\section{Proof of Lagrange's Theorem}\label{SecProof}
The goal of this section is to prove
\begin{theorem}[Lagrange's Theorem for the Romik system]\label{LagrangeTheorem}
Let $P = (\alpha, \beta)\in \QQQ$. Its Romik digit expansion
\[
(\alpha, \beta) = [d_1, d_2, \dots, ]_\QQQ
\]
is eventually periodic if and only if $P$ is defined over a real quadratic field, namely, there exists a squarefree positive integer $D$ such that $\alpha, \beta\in \mathbb{Q}(\sqrt D)$.
\end{theorem}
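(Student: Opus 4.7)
The plan is to mirror Lagrange's classical proof for simple continued fractions: exhibit an arithmetic invariant of the $T$-orbit that is simultaneously bounded (by a geometric argument on the unit circle) and discrete (by an integrality argument), and then conclude that the orbit is finite. I identify each $P\in\QQQ$ with a representing vector in $C_Q^+$, and use that the lift of $T$ to $C_Q^+$ is left multiplication by $M_d^{-1}=U_dH$, where $d$ is the Romik digit of $P$.

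For the easy direction, after replacing $P$ by $T^{n_0}(P)$ I may assume the Romik expansion is purely periodic of period $k$. Then a representative $\mathbf{v}_0\in C_Q^+$ of $P$ is an eigenvector of the integer matrix $N:=M_{d_k}^{-1}\cdots M_{d_1}^{-1}\in O_Q(\mathbb{Z})$. Since $N$ is $Q$-orthogonal, its three eigenvalues have the shape $\{\lambda,1/\lambda,\pm1\}$, and the trace relation forces $\lambda+1/\lambda\in\mathbb{Q}$; hence $\lambda$ satisfies a quadratic over $\mathbb{Q}$ and lies in some $\mathbb{Q}(\sqrt D)$. Solving $(N-\lambda I)\mathbf{v}_0=0$ then places $\mathbf{v}_0$ in $\mathbb{Q}(\sqrt D)^3$ (the degenerate case $\lambda=\pm1$ corresponds to $P$ being rational), so $\alpha,\beta\in\mathbb{Q}(\sqrt D)$. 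The general eventually periodic case follows because recovering $P$ from its purely periodic tail uses only integer matrices, which preserve the field.

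For the hard direction, suppose $\alpha,\beta\in\mathbb{Q}(\sqrt D)$. Take a primitive representative $\mathbf{v}=\mathbf{a}+\sqrt D\,\mathbf{b}\in\mathbb{Z}[\sqrt D]^3\cap C_Q^+$ with $\mathbf{a},\mathbf{b}\in\mathbb{Z}^3$. The Galois conjugate $\bar{\mathbf{v}}=\mathbf{a}-\sqrt D\,\mathbf{b}$ lies in $C_Q$ (since $Q$ has rational coefficients) and represents a point $\bar P\in\UUU$; the crucial feature is that $\bar P$ lies on the \emph{full} unit circle, so $|\bar\alpha|,|\bar\beta|\le 1$, even though $\bar P$ generally is not in $\QQQ$. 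A short bilinearity-and-antisymmetry calculation yields
\[
\mathbf{v}\times_Q\bar{\mathbf{v}}=-2\sqrt D\,(\mathbf{a}\times_Q\mathbf{b}),
\]
so $\mathbf{a}\times_Q\mathbf{b}\in\mathbb{Z}^3$ is an integral invariant of the Galois-conjugate pair. Since each $M_{d_n}^{-1}$ is an integer $O_Q$-matrix and commutes with Galois conjugation, the corresponding invariants $\mathbf{a}_n\times_Q\mathbf{b}_n$ along the $T$-orbit are related by these matrices in a controlled way.

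Now the heart of the argument. Normalizing $\mathbf{v}_n$ to have third coordinate $1$, both $\mathbf{v}_n\in\QQQ$ and $\bar{\mathbf{v}}_n\in\UUU$ have Euclidean norm at most $\sqrt 2$, so $\mathbf{v}_n\times_Q\bar{\mathbf{v}}_n$ lies in a fixed bounded ball in $\mathbb{R}^3$. If the $\mathbf{a}_n,\mathbf{b}_n$ can be chosen primitively in $\mathbb{Z}^3$ at every step, the invariants $\mathbf{a}_n\times_Q\mathbf{b}_n$ lie in a common integer lattice, and bounded plus discrete equals finite. Finally, $\mathbf{v}_n\times_Q\bar{\mathbf{v}}_n$ is $Q$-orthogonal to both $\mathbf{v}_n$ and $\bar{\mathbf{v}}_n$, so it determines the plane they span and hence the pair $\{\mathbf{v}_n,\bar{\mathbf{v}}_n\}$ of null lines in that plane up to finitely many choices. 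Two orbit vectors must then coincide projectively, so $T^m(P)=T^n(P)$ for some $m<n$, giving eventual periodicity. The main obstacle I foresee is the discreteness step: one has to track a primitivity normalization along the orbit so that the $Q$-cross products genuinely live in a single integer lattice, rather than in an increasing sequence of rescaled ones. This is precisely the kind of bookkeeping that the $Q$-cross product machinery referenced in the introduction seems designed to manage.
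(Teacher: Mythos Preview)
Your overall architecture matches the paper's: track the $Q$-cross product $\mathbf{w}_n=\mathbf{v}_n\times_Q\mathbf{v}_n^{\sigma}$ along the orbit, establish that it is discrete and bounded, and conclude finiteness. The easy direction and the discreteness step are fine. But your boundedness argument has a genuine gap, and you have misidentified where the difficulty lies.

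You bound the \emph{normalized} cross product $(\alpha_n,\beta_n,1)\times_Q(\alpha_n^{\sigma},\beta_n^{\sigma},1)$, which indeed has Euclidean norm at most $2$ since both factors lie on $\UUU$. But the object that lives in the fixed lattice is the \emph{integral} cross product $\mathbf{w}_n=\mathbf{v}_n\times_Q\mathbf{v}_n^{\sigma}$, and these differ by the scalar $c_n c_n^{\sigma}=N_{K/\mathbb{Q}}(c_n)$, where $c_n$ is the third coordinate of the integral $\mathbf{v}_n$. Nothing in your argument controls $N(c_n)$; it can in principle blow up along the orbit (equivalently, the conjugate point $(\alpha_n^{\sigma},\beta_n^{\sigma})$ could drift arbitrarily close to $(\alpha_n,\beta_n)$, making the normalized cross product tend to zero while the integral one stays on the fixed hyperboloid $\mathcal{H}(W)$). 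Conversely, discreteness is \emph{not} the obstacle you flag at the end: since each $M_{d}^{-1}\in O_Q(\mathbb{Z})$ is unimodular, $\mathbf{w}_n=\pm M_{d_n}^{-1}\cdots M_{d_1}^{-1}\mathbf{w}_0$ automatically stays in the same lattice as $\mathbf{w}_0$, with no re-primitivization needed.

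The paper's boundedness proof is not a direct estimate but a descent argument on the integral $\mathbf{w}_n$ themselves. All $\mathbf{w}_n$ lie on the fixed hyperboloid $Q=W$ and in the lattice $(\sqrt{D}\,\mathbb{Z}/2)^3$. The key geometric input is that, because $\mathbf{v}_n$ represents a point in $\QQQ$, the cross product $\mathbf{w}_n$ is forced into a specific sector of the hyperboloid (roughly, the part projecting near the first quadrant). One then checks via Proposition~\ref{EmilyLemma} that whenever $|x_3(\mathbf{w}_n)|$ exceeds an explicit threshold $R$, the vector $\mathbf{w}_n$ satisfies $1<w_1/w_3+w_2/w_3<2$, so $H$ (hence $M_{d_{n+1}}^{-1}=U_{d_{n+1}}H$) strictly decreases $|x_3|$; and when $|x_3(\mathbf{w}_n)|\le R$, a crude estimate shows $|x_3(\mathbf{w}_{n+1})|$ cannot exceed a fixed multiple of $R$. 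Together these trap the orbit in a bounded region. This is the step your proposal is missing.
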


\subsection{$Q$-cross product}\label{SectionTwistedProduct}
Let $\mathbf{v}_1, \mathbf{v}_2$ be two vectors in $\mathbb{R}^3$. 
We define the \emph{$Q$-cross product} $\mathbf{v}_3 = \mathbf{v}_1 \times_Q \mathbf{v}_2$ by
\begin{equation}\label{TwistedCrossProduct}
\mathbf{v}_3 =  M_Q^{-1} (\mathbf{v}_1 \times \mathbf{v}_2),
\end{equation} 
where $\mathbf{v}_1 \times \mathbf{v}_2$ is the usual cross product in $\mathbb{R}^3$ and $M_Q$ is the symmetric matrix associated with the quadratic form $Q(\mathbf{x})$. 
For the case of Pythagorean form $Q(\mathbf{x}) = x_1^2 + x_2^2 - x_3^3$, which is all we need in this paper, 
\begin{equation}\label{CrossProductCoordinates}
\mathbf{v}_1 \times_Q \mathbf{v}_2
= 
(
 b_1c_2 - b_2c_1,  
 a_2c_1 - a_1c_2,  
 a_2b_1 - a_1b_2
),
\end{equation}
if we write $\mathbf{v}_1 = (a_1, b_1, c_1)$ and $\mathbf{v}_2 = (a_2, b_2, c_2)$.

We deduce from \eqref{TripleCorrespondence} and \eqref{TwistedCrossProduct} that 
$\mathbf{v}_3 = \mathbf{v}_1 \times_Q \mathbf{v}_2$ satisfies
\begin{enumerate}
\item[(A)] $\langle \mathbf{v}_1, \mathbf{v}_3 \rangle  = \langle \mathbf{v}_2, \mathbf{v}_3 \rangle = 0$, and
\item[(B)] $Q(\mathbf{v}_3) = [\mathbf{v}_1, \mathbf{v}_2, \mathbf{v}_3]$, where $[\mathbf{v}_1, \mathbf{v}_2, \mathbf{v}_3]:=(\mathbf{v}_1 \times \mathbf{v}_2) \cdot \mathbf{v}_3$ is by definition the triple product of $\mathbf{v}_1, \mathbf{v}_2, \mathbf{v}_3$.
\end{enumerate} 
For example,
\[
\langle 
\mathbf{v}_1, \mathbf{v}_3
\rangle
=
\mathbf{v}_1^T M_Q M_Q^{-1} (\mathbf{v}_1 \times \mathbf{v}_2) = 0
\]
and likewise $\langle\mathbf{v}_2, \mathbf{v}_3\rangle=0$. Also,
\[
Q(\mathbf{v}_3) = 
\langle 
\mathbf{v}_3, \mathbf{v}_3
\rangle =
\mathbf{v}_3^T M_Q \,M_Q^{-1} (\mathbf{v}_1 \times \mathbf{v}_2) =
\mathbf{v}_3\cdot (\mathbf{v}_1 \times \mathbf{v}_2) =
(\mathbf{v}_1 \times \mathbf{v}_2) \cdot \mathbf{v}_3.
\]
Conversely, if $\mathbf{v}_1$ and $\mathbf{v}_2$ are linearly independent, then
the properties (A) and (B) characterizes $\mathbf{v}_1\times_Q\mathbf{v}_2$ uniquely.
First, the vectors satisfying (A) form a one-dimensional subspace of $\mathbb{R}^3$, from which we choose a (nonzero) vector $\mathbf{u}$. Then, we take $\mathbf{v}_3 = k\mathbf{u}$ with
$k = [\mathbf{v}_1, \mathbf{v}_2, \mathbf{u}]/Q(\mathbf{u})$. It is easily verified that $\mathbf{v}_3$ satisfies (B).

Even though we do not need it in this paper, we present a more functorial construction of $\mathbf{v}_1 \times_Q \mathbf{v}_2$ for future reference.
What we discuss in this paragraph is standard. See, for example, \cite{Dar94}.
Let $V$ be a $n$-dimensional vector space (over $\mathbb{R}$), equipped with a nondegenerate bilinear pairing $\langle \cdot, \cdot \rangle$.
Then for each $p = 1, \dots, n$, the bilinear pairing $\langle \cdot, \cdot \rangle_p$ of $\bigwedge^p V$, which is defined by
\[
\langle
\mathbf{v}_1 \wedge \cdots \wedge \mathbf{v}_p, 
\mathbf{w}_1 \wedge \cdots \wedge \mathbf{w}_p
\rangle_p
= \det (
\langle
\mathbf{v}_i,
\mathbf{w}_j
\rangle
)_{i, j = 1, \dots, p},
\]
is also nondegenerate. 
Fix a basis of $\bigwedge^n V$ (that is, an orientation or a volume form), say, $\tau \in \bigwedge^n V$. 
Then, for any $\lambda \in \bigwedge^p V$, there exists a unique element of $\bigwedge^{n-p} V$, which we denote by $\star \lambda$, satisfying
\[
\lambda \wedge \mu =
\langle
\star \lambda,
\mu
\rangle_p
\,
\tau
\]
for all $\mu \in \bigwedge^{n-p} V$. This way, we define \emph{the Hodge star operator}
\[
\bigwedge^p V 
\longrightarrow 
\bigwedge^{n-p} V, \qquad \lambda \mapsto \star\lambda.
\]
The relevance of this in our context is this. 
Let $V = \mathbb{R}^3$, equipped with the pairing $\langle \cdot, \cdot \rangle$ given by the Pythagorean form $Q(\mathbf{x})$ and choose the standard orientation $\tau = \mathbf{e}_1\wedge \mathbf{e}_2 \wedge \mathbf{e}_3$.
Then one can show as an exercise that 
\begin{equation}\label{HodgeStar}
\mathbf{v}_1 \times_Q \mathbf{v}_2 = \star(\mathbf{v}_1 \wedge \mathbf{v}_2),
\end{equation}
because $\star(\mathbf{v}_1 \wedge \mathbf{v}_2)$ satisfies the properties (A) and (B) above.

\begin{proposition}\label{OrthogonalActionCrossProduct}
For any $A\in O_Q(\mathbb{R}^3)$ and $\mathbf{v}_1, \mathbf{v}_2 \in \mathbb{R}^3$, 
\[
A\mathbf{v}_1\times_Q A\mathbf{v}_2 = (\det A) A(\mathbf{v}_1\times_Q\mathbf{v}_2).
\]
\end{proposition}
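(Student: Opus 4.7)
The plan is to verify the identity by using the characterization of $\mathbf{v}_1 \times_Q \mathbf{v}_2$ via properties (A) and (B) established right after \eqref{CrossProductCoordinates}. First one reduces to the case when $\mathbf{v}_1, \mathbf{v}_2$ are linearly independent; if they are not, then $A\mathbf{v}_1$ and $A\mathbf{v}_2$ are linearly dependent as well (since $A$ is invertible), and both sides of the claimed identity are zero. Also note that any $A \in O_Q(\mathbb{R})$ satisfies $(\det A)^2 = 1$, since $A^T M_Q A = M_Q$ forces $(\det A)^2 \det M_Q = \det M_Q$.

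Next, set $\mathbf{w} = (\det A)\, A(\mathbf{v}_1 \times_Q \mathbf{v}_2)$ and verify that $\mathbf{w}$ satisfies both (A) and (B) with respect to the pair $A\mathbf{v}_1, A\mathbf{v}_2$. Property (A) is immediate from the fact that $A$ preserves the bilinear form:
\[
\langle A\mathbf{v}_i, \mathbf{w}\rangle = (\det A)\langle A\mathbf{v}_i, A(\mathbf{v}_1 \times_Q \mathbf{v}_2)\rangle = (\det A)\langle \mathbf{v}_i, \mathbf{v}_1 \times_Q \mathbf{v}_2\rangle = 0.
\]
For property (B), on one hand $Q(\mathbf{w}) = (\det A)^2 Q(A(\mathbf{v}_1 \times_Q \mathbf{v}_2)) = Q(\mathbf{v}_1 \times_Q \mathbf{v}_2) = [\mathbf{v}_1, \mathbf{v}_2, \mathbf{v}_1 \times_Q \mathbf{v}_2]$. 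On the other hand, using the standard determinant identity $[A\mathbf{u}_1, A\mathbf{u}_2, A\mathbf{u}_3] = (\det A)[\mathbf{u}_1, \mathbf{u}_2, \mathbf{u}_3]$ for the ordinary triple product,
\[
[A\mathbf{v}_1, A\mathbf{v}_2, \mathbf{w}] = (\det A)[A\mathbf{v}_1, A\mathbf{v}_2, A(\mathbf{v}_1 \times_Q \mathbf{v}_2)] = (\det A)^2[\mathbf{v}_1, \mathbf{v}_2, \mathbf{v}_1 \times_Q \mathbf{v}_2],
\]
which equals $Q(\mathbf{w})$ since $(\det A)^2 = 1$. By the uniqueness stated after (A) and (B), we conclude $\mathbf{w} = A\mathbf{v}_1 \times_Q A\mathbf{v}_2$, as required.

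I do not expect any serious obstacle here: the argument is a direct unwinding of the defining properties and uses only the $Q$-orthogonality of $A$, the naturality of the ordinary cross/triple product under linear maps, and the fact that $\det A = \pm 1$. A more computational alternative, which avoids uniqueness, is to use the formula $A\mathbf{v}_1 \times A\mathbf{v}_2 = (\det A) A^{-T}(\mathbf{v}_1 \times \mathbf{v}_2)$ and the relation $A^{-T} = M_Q A M_Q^{-1}$ coming from $A^T M_Q A = M_Q$, whence
\[
A\mathbf{v}_1 \times_Q A\mathbf{v}_2 = M_Q^{-1}(A\mathbf{v}_1 \times A\mathbf{v}_2) = (\det A)\, A M_Q^{-1}(\mathbf{v}_1 \times \mathbf{v}_2) = (\det A)\, A(\mathbf{v}_1 \times_Q \mathbf{v}_2).
\]
Either route yields the proposition in a few lines.
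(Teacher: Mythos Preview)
Your proof is correct and follows essentially the same approach as the paper: reduce to the linearly independent case, then verify that $(\det A)\,A(\mathbf{v}_1\times_Q\mathbf{v}_2)$ satisfies the characterizing properties (A) and (B) with respect to $A\mathbf{v}_1, A\mathbf{v}_2$, using $Q$-orthogonality of $A$ and the determinant identity for the triple product. Your additional remark giving a direct matrix computation via $A^{-T} = M_Q A M_Q^{-1}$ is a valid alternative not present in the paper.
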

\begin{proof}
First of all, if $\mathbf{v}_1$ and $\mathbf{v}_2$ are linearly dependent, then both sides of the above equation are zero. So, we assume that $\mathbf{v}_1$ and $\mathbf{v}_2$ are linearly independent and we prove that
\[
\mathbf{v} := (\det A) A(\mathbf{v}_1\times_Q\mathbf{v}_2)
\]
satisfies the properties (A) and (B) above with respect to $A\mathbf{v}_1$ and $A\mathbf{v}_2$. For (A),
\[
\langle 
A\mathbf{v}_1, \mathbf{v}
\rangle =
(\det A)
\langle 
A\mathbf{v}_1, A(\mathbf{v}_1\times_Q\mathbf{v}_2)
\rangle =
(\det A)
\langle 
\mathbf{v}_1, \mathbf{v}_1\times_Q\mathbf{v}_2
\rangle =0.
\]
Likewise, $\langle 
A\mathbf{v}_2, \mathbf{v}
\rangle =0.$
For (B), we note in general that
\[
[A\mathbf{w}_1, A\mathbf{w}_2, A\mathbf{w}_3] = (\det A)[\mathbf{w}_1, \mathbf{w}_2, \mathbf{w}_3],
\]
for any $A\in O_Q(\mathbb{R})$ and $\mathbf{w}_1, \mathbf{w}_2, \mathbf{w}_3\in \mathbb{R}^3$.
So, 
\begin{align*}
[A\mathbf{v}_1, A\mathbf{v}_2, \mathbf{v}] &=
[A\mathbf{v}_1, A\mathbf{v}_2, (\det A) A(\mathbf{v}_1\times_Q\mathbf{v}_2)] =
[\mathbf{v}_1, \mathbf{v}_2, \mathbf{v}_1\times_Q \mathbf{v}_2] \\
&= Q(\mathbf{v}_1\times_Q \mathbf{v}_2) = Q(\mathbf{v}),
\end{align*}
where the last equality is because $A$ is orthogonal with respect to $Q(\mathbf{x})$. 
This proves (B).
\end{proof}

\begin{proposition}\label{Qthree}
Let $\mathbf{v}_1, \mathbf{v}_2 \in \mathbb{R}^3$ and $\mathbf{v}_3 = \mathbf{v}_1\times_Q \mathbf{v}_2$. Assume $\mathbf{v}_1\in C_Q$, that is, $Q(\mathbf{v}_1) = 0$. Then
\[
Q(\mathbf{v}_3)  = (
\langle 
\mathbf{v}_1, \mathbf{v}_2
\rangle
)^2.
\]
\end{proposition}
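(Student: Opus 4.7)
The plan is to establish the general ``Lagrange-type'' identity
\[
Q(\mathbf{v}_1 \times_Q \mathbf{v}_2) = \langle \mathbf{v}_1, \mathbf{v}_2 \rangle^2 - Q(\mathbf{v}_1)Q(\mathbf{v}_2),
\]
for all $\mathbf{v}_1, \mathbf{v}_2 \in \mathbb{R}^3$, from which the proposition will follow immediately by setting $Q(\mathbf{v}_1)=0$. The reason for proving the more general identity, rather than the stated special case, is that it admits a clean conceptual derivation via the Gram determinant, and the specialization costs nothing.

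First, set $\mathbf{v}_3 = \mathbf{v}_1 \times_Q \mathbf{v}_2$ and assemble the $3\times3$ matrix $V=(\mathbf{v}_1\mid \mathbf{v}_2\mid \mathbf{v}_3)$. The key computation compares two evaluations of $\det(V^TM_Q V)$. On the one hand, the multiplicativity of the determinant together with $\det M_Q=-1$ gives $\det(V^T M_Q V) = -\det(V)^2 = -[\mathbf{v}_1,\mathbf{v}_2,\mathbf{v}_3]^2$. On the other hand, $V^T M_QV$ is the Gram matrix with entries $\langle \mathbf{v}_i,\mathbf{v}_j\rangle$, and property (A) forces the off-diagonal entries in the last row and column to vanish; the matrix becomes block-diagonal, so its determinant equals
\[
Q(\mathbf{v}_3)\bigl(Q(\mathbf{v}_1)Q(\mathbf{v}_2) - \langle \mathbf{v}_1, \mathbf{v}_2\rangle^2\bigr).
\]
Next, invoke property (B) to replace $[\mathbf{v}_1,\mathbf{v}_2,\mathbf{v}_3]^2$ by $Q(\mathbf{v}_3)^2$, equate the two expressions, and (when $Q(\mathbf{v}_3)\neq 0$) cancel one factor of $Q(\mathbf{v}_3)$ to obtain the Lagrange-type identity above.

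The one small obstacle is the degenerate case $Q(\mathbf{v}_3)=0$, where the cancellation is invalid. I would resolve it by the standard polynomial-identity/continuity argument: both sides of the desired identity are polynomial functions of the six coordinates of $\mathbf{v}_1,\mathbf{v}_2$, they agree on the (Zariski-dense) open set where $Q(\mathbf{v}_3)\neq 0$, and therefore agree everywhere. Alternatively, one can just verify the identity by expanding both sides using the explicit formula \eqref{CrossProductCoordinates}, which takes a few lines of bookkeeping. Once the Lagrange identity is in hand, specializing to $Q(\mathbf{v}_1)=0$ collapses the right-hand side to $\langle \mathbf{v}_1,\mathbf{v}_2\rangle^2$, completing the proof.
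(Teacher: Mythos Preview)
Your proof is correct and takes a genuinely different route from the paper. The paper proceeds by direct coordinate computation: after rescaling to $\mathbf{v}_1=(\alpha,\beta,1)$ with $\alpha^2+\beta^2=1$ and $\mathbf{v}_2=(\alpha',\beta',1)$ (treating the case $x_3(\mathbf{v}_2)=0$ separately), it writes out $\mathbf{v}_3$ from \eqref{CrossProductCoordinates} and expands $Q(\mathbf{v}_3)$ by hand, using $\alpha^2+\beta^2=1$ to collapse the expression to $(1-\alpha\alpha'-\beta\beta')^2=\langle\mathbf{v}_1,\mathbf{v}_2\rangle^2$. Your argument instead establishes the full Lagrange-type identity $Q(\mathbf{v}_1\times_Q\mathbf{v}_2)=\langle\mathbf{v}_1,\mathbf{v}_2\rangle^2-Q(\mathbf{v}_1)Q(\mathbf{v}_2)$ via the Gram determinant $\det(V^TM_QV)$, invoking properties (A) and (B) to make the matrix block-diagonal and to identify $[\mathbf{v}_1,\mathbf{v}_2,\mathbf{v}_3]$ with $Q(\mathbf{v}_3)$, and then handles the degenerate locus by Zariski density. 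Your approach is more conceptual and manifestly form-independent (so it would transfer verbatim to the other quadratic forms the authors allude to), and it yields the stronger identity as a bonus; the paper's computation is more elementary and self-contained, avoiding the density/continuity step, but is tied to the explicit Pythagorean coordinates.
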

\begin{proof}
It is possible to give a ``coordinate-free'' proof based on \eqref{HodgeStar}.
Instead, we present an elementary approach beginning with \eqref{CrossProductCoordinates}. 
As the validity of the equation in this proposition remains unchanged under rescaling of $\mathbf{v}_1$ and $\mathbf{v}_2$, without loss of generality, we may assume
\[
\mathbf{v}_1 = (\alpha, \beta, 1), \text{ and }
\mathbf{v}_2 = (\alpha', \beta', 1),
\]
with $\alpha^2 + \beta^2 = 1$.
(Later in the proof, we separately deal with the case when the $x_3$-coordinate of $\mathbf{v}_2$ is zero.)
Then \eqref{CrossProductCoordinates} gives
$\mathbf{v}_3 = (\beta - \beta', \alpha' - \alpha, \alpha'\beta - \alpha\beta')$,
so
	\begin{align*}
		Q(\mathbf{v}_3) &= (\beta - \beta')^2 + (\alpha'-\alpha)^2 - (\alpha'\beta - \alpha\beta')^2 \\
			   &= 1 + (\alpha'^2 + \beta'^2) - 2(\alpha\alpha' + \beta\beta') - (\alpha'^2\beta^2 + \alpha^2\beta'^2 - 2\alpha\alpha'\beta\beta') \\ 
			   &= 1 + (\alpha'^2 + \beta'^2) - 2(\alpha\alpha' + \beta\beta') - (\alpha'^2(1 - \alpha^2) +(1 - \beta^2)\beta'^2 - 2\alpha\alpha'\beta\beta') \\ 
			   &= 1 - 2(\alpha\alpha' + \beta\beta') + (\alpha\alpha'+\beta\beta')^2 \\ 
	     &= (\langle \mathbf{v}_1, \mathbf{v}_2\rangle )^2.
	\end{align*}
For the case $\mathbf{v}_2 = (\alpha', \beta', 0)$, we have $\mathbf{v}_3 = ( - \beta', \alpha', \alpha'\beta - \alpha\beta')$. And, the rest is similar to the above calculation. We omit the detaill.
\end{proof}

Assume further that $\mathbf{v}_1, \mathbf{v}_2 \in C_Q$ and that $\mathbf{v}_1$ and $\mathbf{v}_2$ are linearly independent.
Then $\mathbf{w}:= \mathbf{v}_1\times_Q \mathbf{v}_2$ has a simple geometric description.
The two subspaces
\[
\langle
\mathbf{v}_1
\rangle^{\perp} := 
\{ \mathbf{x}\in\mathbb{R}^3 \mid \langle \mathbf{v}_1, \mathbf{x} \rangle = 0 \}
\]
and
\[
	\langle \mathbf{v}_2 \rangle^{\perp} := 
\{ \mathbf{x}\in\mathbb{R}^3 \mid \langle \mathbf{v}_2, \mathbf{x} \rangle = 0 \}
\]
are tangent to $C_Q$ and their intersection is the line spanned by $\mathbf{w}$. 
Also, we have $\langle \mathbf{v}_1, \mathbf{v}_2 \rangle \neq 0$, otherwise the span of $\mathbf{v}_1$ and $\mathbf{v}_2$ would be contained in $C_Q$, contradicting the nondegeneracy of $Q(\mathbf{x})$. 
So, Proposition~\ref{Qthree} shows that $W:=Q(\mathbf{w})>0$. 
In particular, $\mathbf{w}$ lies on the \emph{one-sheeted hyperboloid}
\begin{equation}\label{Hyperboloid}
		\mathcal{H}(W): x_1^2 + x_2^2 - x_3^2 = W.
	\end{equation}

Keeping the assumption that $\mathbf{v}_1, \mathbf{v}_2 \in C_Q$, we additionally suppose that $\mathbf{v}_1$ represents a point in $\QQQ$ via the projection map $\pi$, which is defined in \eqref{ProjectionMap}.
In this case, the following proposition says that $\mathbf{v}_1 \times_Q \mathbf{v}_2$ represents a point in $\mathbb{R}^2$ ``close to'' $\QQQ$. 
\begin{proposition}\label{TwistedCrossProductIsClose}
	Suppose $\mathbf{v}_1, \mathbf{v}_2 \in C_Q$ are linearly independent. Assume that $\mathbf{v}_1$ represents a point in $\QQQ$.
	Write $\mathbf{w}:=(w_1, w_2, w_3) = \mathbf{v}_1 \times_Q \mathbf{v}_2$ and let $W = Q(\mathbf{w})$.
	\begin{enumerate}
		\item If $w_3 > 0$, then $w_1 > - \sqrt W$ and $w_2 > -\sqrt W$.
		\item If $w_3 < 0$, then $w_1 < \sqrt W$ and $w_2 < \sqrt W$.
	\end{enumerate}
\end{proposition}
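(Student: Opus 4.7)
The plan is to parametrize both vectors by angles on the unit circle and reduce the proposition to an elementary trigonometric inequality. By the positive-scaling invariance of both sides of the stated inequalities, and by the observation that replacing $(\mathbf{v}_1, \mathbf{v}_2)$ with $(-\mathbf{v}_1, \mathbf{v}_2)$ or $(\mathbf{v}_1, -\mathbf{v}_2)$ negates $\mathbf{w}$ while leaving $W$ unchanged (so the two cases of the proposition are interchanged), the proof reduces to the normalized situation
\[
\mathbf{v}_1 = (\cos\theta, \sin\theta, 1), \qquad \mathbf{v}_2 = (\cos\theta', \sin\theta', 1),
\]
with $\theta \in [0, \pi/2]$ and $\theta' \in \mathbb{R}$, in which both (1) and (2) are to be verified simultaneously.

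With this normalization, \eqref{CrossProductCoordinates} gives $\mathbf{w} = (\sin\theta - \sin\theta',\ \cos\theta' - \cos\theta,\ \sin(\theta-\theta'))$, while Proposition \ref{Qthree} and Cauchy--Schwarz (which forces $\langle \mathbf{v}_1, \mathbf{v}_2\rangle = \cos(\theta-\theta') - 1 < 0$ under the linear independence hypothesis) yield $\sqrt W = 1 - \cos(\theta-\theta')$. I would then introduce $u = (\theta + \theta')/2$ and $v = (\theta - \theta')/2 \in (-\pi/2, \pi/2]$ (choosing the representative of $\theta'$ modulo $2\pi$), and apply standard sum-to-product and half-angle identities to rewrite everything as
\[
\mathbf{w} = 2\sin v\,(\cos u,\ \sin u,\ \cos v), \qquad \sqrt W = 2\sin^2 v.
\]

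For Case (1), $w_3 > 0$ translates to $\sin v \cos v > 0$, which forces $v \in (0, \pi/2)$; in particular $\sin v, \cos v > 0$. The two inequalities become $w_j + \sqrt W = 2\sin v\,(T_j + \sin v)$ with $T_1 = \cos u$ and $T_2 = \sin u$, so the claim reduces to the positivity of $\cos u + \sin v$ and $\sin u + \sin v$. The first follows at once from $u = \theta - v \in (-\pi/2, \pi/2)$, which makes $\cos u > 0$; for the second, sum-to-product gives $\sin u + \sin v = 2\sin(\theta/2)\cos(\theta'/2)$, and the ranges $\theta/2 \in [0, \pi/4]$ together with $\theta'/2 = (u - v)/2 \in (-\pi/2, \pi/4)$ ensure both factors are nonnegative. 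Case (2) is handled symmetrically using $v \in (-\pi/2, 0)$.

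The main obstacle I foresee is sharpness at the corners of $\QQQ$. The bound $\sin(\theta/2)\cos(\theta'/2) \ge 0$ in the step above is not strict: when $\theta = 0$, i.e., $\mathbf{v}_1$ represents the corner $(1, 0) \in \QQQ$, direct computation gives $w_2 = -\sqrt W$ exactly (take for instance any $\mathbf{v}_2 = (\cos\theta', \sin\theta', 1)$ with $\sin\theta' < 0$), and the symmetric case $\theta = \pi/2$ yields $w_1 = -\sqrt W$. So to obtain the strict inequalities asserted in the proposition one must either implicitly restrict $\mathbf{v}_1$ to the interior of $\QQQ$, relax $>$ to $\ge$, or handle the two corner points—which are precisely the fixed points of $T$—as a separate trivial case before the Lagrange-theoretic machinery kicks in.
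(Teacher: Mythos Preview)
Your trigonometric parametrization is correct and gives a self-contained computational proof, genuinely different from the paper's approach. The paper argues geometrically: it slices both $C_Q$ and the hyperboloid $\mathcal{H}(W)$ by the plane $x_3 = w_3$, obtaining two concentric circles of radii $|w_3|$ and $\sqrt{w_3^2 + W}$; the plane $\langle\mathbf{v}_1\rangle^\perp$ meets this slice in a line tangent to the inner circle at a first-quadrant point (since $\mathbf{v}_1$ represents a point of $\QQQ$ and $w_3>0$), and $\mathbf{w}$, lying on both this tangent and the outer circle, is thereby confined to an arc that is read off from the picture. Your half-angle computation trades that picture for explicit formulas; it is a bit longer but has the merit of making every inequality checkable by hand and of exposing the boundary behaviour, which the paper's figure-based argument leaves implicit.

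On that boundary behaviour: your diagnosis is correct and is in fact an imprecision in the paper's statement rather than a defect of your method. When $\mathbf{v}_1$ represents $(1,0)$ or $(0,1)$ the tangent line through the first-quadrant point of the inner circle meets the outer circle at an endpoint of the arc, and one of the strict inequalities degenerates to equality (your example at $\theta=0$ exhibits this; the $\theta=\pi/2$ instance gives $w_1=\sqrt W$ in case (2), equivalently $w_1=-\sqrt W$ in case (1) after undoing your sign normalization of $\mathbf{v}_2$). This causes no trouble downstream: the proposition is invoked only in Step~1 of the Lagrange argument, where $\mathbf{v}_n$ represents $T^n(P)$ for a quadratic-irrational $P$, hence never a corner of $\QQQ$; alternatively one can simply read the conclusion with $\ge$ in place of $>$, which suffices for the subsequent integrality argument.
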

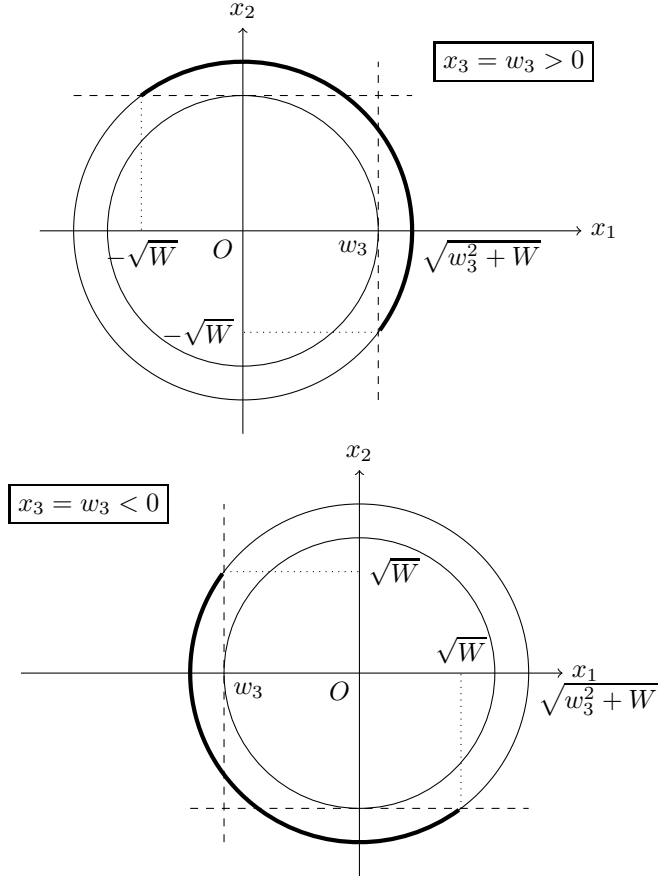
\begin{figure}
\begin{center}
	\begin{tikzpicture}[scale=0.9]
\draw[->] (-3,0)--(5,0) node[right]{$x_1$};
\draw[->] (0,-3)--(0,3) node[above]{$x_2$};
\node at (4, 2.5) {$\boxed{x_3 = w_3>0}$};
\node[below left] at (0, 0) {$O$};

\draw[] (0, 0) circle (2);
\draw[] (0, 0) circle (2.5);

\draw[dashed] (-2.5, 2) -- (2.5, 2);
\draw[dashed] (2, -2.5) -- (2, 2.5);

\draw[ultra thick] ([shift=(-36:2.5)]0, 0) arc (-36:127:2.5);

\draw[dotted] (-1.5, 2) -- (-1.5, 0) node [below] {$-\sqrt W$};
\draw[dotted] (2, -1.5) -- (0, -1.5) node [left] {$-\sqrt W$};


\node[below left] at (2, 0) {$w_3$};
\node[below right] at (2.5, 0) {$\sqrt{w_3^2 + W}$};
\end{tikzpicture}
\hfill
\begin{tikzpicture}[scale=0.9]
\draw[->] (-5,0)--(3,0) node[right]{$x_1$};
\draw[->] (0,-3)--(0,3) node[above]{$x_2$};
\node at (-4, 2.5) {$\boxed{x_3 = w_3<0}$};
\node[below left] at (0, 0) {$O$};

\draw[] (0, 0) circle (2);
\draw[] (0, 0) circle (2.5);

\draw[dashed] (-2.5, -2) -- (2.5, -2);
\draw[dashed] (-2, -2.5) -- (-2, 2.5);

\draw[ultra thick] ([shift=(144:2.5)]0, 0) arc (144:306:2.5);

\draw[dotted] (1.5, -2) -- (1.5, 0) node [above] {$\sqrt W$};
\draw[dotted] (-2, 1.5) -- (0, 1.5) node [right] {$\sqrt W$};


\node[below right] at (-2, 0) {$w_3$};
\node[below right] at (2.5, 0) {$\sqrt{w_3^2 + W}$};
\end{tikzpicture}
\end{center}
\caption{The sections of $C_Q$ (the inner circle) and of $\mathcal{H}(W)$ (the outer one) by $x_3= w_3$. The first picture is for $w_3 > 0$ and the second is for $w_3 < 0$. \label{MainPictureI}}
\end{figure}
\begin{proof}
We will consider the sections of $C_Q$ and of $\mathcal{H}(W)$ by the plane $x_3 = w_3$, which appear as the concentric circles depicted in Figure~\ref{MainPictureI}. First, asssume $w_3 > 0$. 
Recall that $\mathbf{w}$ lies in the intersection of $\langle \mathbf{v}_1 \rangle^{\perp}$ and $\langle \mathbf{v}_2 \rangle^{\perp}$.
Note that $\langle \mathbf{v}_1 \rangle^{\perp}$ appears as a tangent line to the inner circle (the section of $C_Q$ by $x_3 = w_3$) at $\mathbf{v}_1$. 
Since $x, y \ge 0$ the intersection of $\langle \mathbf{v}_1 \rangle^{\perp}$ and $\langle \mathbf{v}_2 \rangle^{\perp}$ can lie only on the thick arc of the outer circle in Figure~\ref{MainPictureI}. 
It is easy to verify that the thick arc corresponds to the stated inequalities for $w_1$ and $w_2$. The case for $w_3<0$ is similar.
\end{proof}

The next proposition describes how the integrality of $\mathbf{v}$ is reflected in the twisted product. 

\begin{proposition}\label{LatticeTheorem}
Suppose that $\mathbf{v}\in C_Q$ is defined over a real quadratic field $K = \mathbb{Q}(\sqrt D)$, whose ring of integers will be denoted by $\mathcal{O}_K$. Assume that $\mathbf{v}$ and $\mathbf{v}^{\sigma}$ are linearly independent. If $\mathbf{v}\in (\mathcal{O}_K)^3$, then
\[
\mathbf{v} \times_Q \mathbf{v}^{\sigma} \in (\sqrt D \mathbb{Z}/2)^3.
\]
\end{proposition}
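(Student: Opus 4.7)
The plan is to reduce the vector statement to a scalar one by expanding the $Q$-cross product coordinate-wise, and then establish that scalar claim via a Galois-theoretic argument inside $K$.

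Writing $\mathbf{v}=(a,b,c)$ with $a,b,c\in \mathcal{O}_K$, the formula \eqref{CrossProductCoordinates} gives
\[
\mathbf{v}\times_Q\mathbf{v}^{\sigma}
=
(bc^{\sigma}-b^{\sigma}c,\; a^{\sigma}c-ac^{\sigma},\; a^{\sigma}b-ab^{\sigma}).
\]
Every entry has the shape $\delta:=\alpha\beta^{\sigma}-\alpha^{\sigma}\beta$ for some $\alpha,\beta\in\mathcal{O}_K$. So it suffices to prove the scalar claim: for all $\alpha,\beta\in\mathcal{O}_K$, $\delta\in \tfrac{\sqrt D}{2}\mathbb{Z}$.

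First I would show that $\delta$ is a rational multiple of $\sqrt D$. Applying $\sigma$ gives $\delta^{\sigma}=\alpha^{\sigma}\beta-\alpha\beta^{\sigma}=-\delta$, so $\mathrm{Tr}_{K/\mathbb{Q}}(\delta)=\delta+\delta^{\sigma}=0$. Hence $\delta\in K\cap\ker\mathrm{Tr}=\mathbb{Q}\sqrt D$, i.e.\ $\delta=r\sqrt D$ for some $r\in\mathbb{Q}$. Next I would control the denominator of $r$ by observing that $\delta$ is an algebraic integer: $\alpha\beta^{\sigma}$ and $\alpha^{\sigma}\beta$ both lie in $\mathcal{O}_K$ since $\mathcal{O}_K$ is closed under multiplication and under the Galois action, so $\delta\in\mathcal{O}_K$. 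Therefore $\delta\in\mathcal{O}_K\cap\mathbb{Q}\sqrt D$, and a direct check in both cases ($\mathcal{O}_K=\mathbb{Z}[\sqrt D]$ if $D\equiv 2,3\pmod 4$, and $\mathcal{O}_K=\mathbb{Z}[(1+\sqrt D)/2]$ if $D\equiv 1\pmod 4$) identifies this intersection as $\sqrt D\,\mathbb{Z}$. Thus $r\in\mathbb{Z}$, which is stronger than the claimed $r\in\tfrac12\mathbb{Z}$.

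For transparency (and to match the statement's quantifier), I would also spell out the brute-force verification: writing $\alpha=(p+q\sqrt D)/2$, $\beta=(r+s\sqrt D)/2$ (with $p\equiv q$, $r\equiv s\pmod 2$ in the $D\equiv 1$ case, and even $p,q,r,s$ otherwise) and expanding yields $\delta=\tfrac{qr-ps}{2}\sqrt D$, and the parity constraints force $qr-ps\in 2\mathbb{Z}$ in the $D\equiv 1$ case, giving $\delta\in\sqrt D\,\mathbb{Z}$ in all cases.

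The main obstacle is essentially bookkeeping: keeping the two shapes of $\mathcal{O}_K$ straight and verifying the integrality half-step. The conceptual content—that the antisymmetric combination $\alpha\beta^{\sigma}-\alpha^{\sigma}\beta$ lands in the trace-zero part of $\mathcal{O}_K$—is the clean reason the result holds, and once one writes $\delta\in\mathcal{O}_K\cap\mathbb{Q}\sqrt D=\sqrt D\,\mathbb{Z}$ the proposition follows immediately from the coordinate formula \eqref{CrossProductCoordinates}.
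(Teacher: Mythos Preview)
Your proof is correct and follows essentially the same approach as the paper: both argue that $\mathbf{w}=\mathbf{v}\times_Q\mathbf{v}^{\sigma}$ lies in $(\mathcal{O}_K)^3$ by the coordinate formula and then use the antisymmetry $\mathbf{w}^{\sigma}=-\mathbf{w}$ to force each entry into $\sqrt{D}\,\mathbb{Q}\cap\mathcal{O}_K$. You in fact sharpen the paper's conclusion $\sqrt{D}\,\mathbb{Q}\cap\mathcal{O}_K\subseteq(\sqrt{D}/2)\mathbb{Z}$ to the exact identification $\sqrt{D}\,\mathbb{Q}\cap\mathcal{O}_K=\sqrt{D}\,\mathbb{Z}$, which is correct but not needed downstream.
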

\begin{proof}
	Let $ \mathbf{w} = \mathbf{v} \times_Q \mathbf{v}^{\sigma}$.
It is easy to see from \eqref{CrossProductCoordinates} that, whenever $\mathbf{v}$ is in $(\mathcal{O}_K)^3$, $\mathbf{w}$ is also in $(\mathcal{O}_K)^3$. 
At the same time, observe that
\[
	\mathbf{w}^{\sigma} = \mathbf{v}^{\sigma} \times_Q \mathbf{v} = -\mathbf{w}.
\]
Hence, we have $\mathbf{w}\in (\sqrt D \mathbb{Q})^3$, where $\sqrt D \mathbb{Q}:= \{ \sqrt D r\mid r \in \mathbb{Q} \}$. 
This finishes the proof because $\sqrt D \mathbb{Q} \cap \mathcal{O}_K \subseteq \sqrt{D}\mathbb{Z}/2$.
\end{proof}

\subsection{Periodicity is sufficient for quadratic irrationality}\label{SecNecessaryCondition}
We first prove in this subsection that, if $P = (\alpha, \beta)\in \QQQ$ has an eventually periodic Romik digit expansion, then $P$ is defined over a real quadratic extension of $\mathbb{Q}$. 
Suppose that
\[
(\alpha, \beta) = [f_1, f_2, \dots, f_l, \overline{d_1, \dots, d_k}]_\QQQ
=
[f_1, f_2, \dots, f_l, d_1, \dots, d_k, d_1, \dots, d_k,  \dots]_\QQQ.
\]
Define $P'= (\alpha', \beta') := T^l(P)$. Then clearly,
\[
(\alpha', \beta') = [\overline{d_1, \dots, d_k}]_\QQQ
\]
is \emph{purely} periodic. Moreover, if we define $\bm{\xi}:= (\alpha, \beta, 1)\in C_Q$, then $\bm{\xi}$ represents $P$, consequently, the vector
\[
M_{f_l}^{-1}\cdots M_{f_1}^{-1}\bm{\xi}
\]
represents $P'$.
This argument shows that, if $\alpha', \beta'$ are in a real quadratic field $K$, then $\alpha, \beta\in K$ as well. 
Thus, it is enough to prove the statement for the purely periodic case.
Therefore, we will assume from now on that 
\[
P = (\alpha, \beta) = [\overline{d_1, \dots, d_k}]_\QQQ
\]
is purely periodic. Then the quadratic irrationality of $P$ is an immediate consequence of Theorem~\ref{EigenvectorTheorem} below. 
We give an easy lemma first.

\begin{lemma}\label{Eigenvalues}
Suppose that $A\in O_Q(\mathbb{R}^3)$. 
Let $\mathbf{v}_1$ and $\mathbf{v}_2$ be eigenvectors of $A$, satisfying $\langle \mathbf{v}_1, \mathbf{v}_2 \rangle \neq 0$, and let $\lambda_1, \lambda_2$ be their eigenvalues respectively. Then $\lambda_1 \lambda_2 = 1$. In particular, if $\mathbf{v}$ is an eigenvector of $A$ with $Q(\mathbf{v}) \neq 0$, then its eigenvalue is $1$ or $-1$. 
\end{lemma}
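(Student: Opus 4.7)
The plan is to exploit the fact that an orthogonal map preserves the bilinear form $\langle \cdot, \cdot \rangle$ associated with $Q$ (by the polarization identity \eqref{TripleCorrespondence}). This reduces the lemma to a one-line algebraic manipulation.

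Concretely, I would first observe that since $A \in O_Q(\mathbb{R})$ preserves $Q(\mathbf{x})$, it also preserves the associated bilinear form: for any $\mathbf{x}, \mathbf{y} \in \mathbb{R}^3$, $\langle A\mathbf{x}, A\mathbf{y} \rangle = \langle \mathbf{x}, \mathbf{y} \rangle$. Applied to the eigenvectors $\mathbf{v}_1, \mathbf{v}_2$, this yields
\[
\langle \mathbf{v}_1, \mathbf{v}_2 \rangle = \langle A\mathbf{v}_1, A\mathbf{v}_2 \rangle = \langle \lambda_1 \mathbf{v}_1, \lambda_2 \mathbf{v}_2 \rangle = \lambda_1 \lambda_2 \langle \mathbf{v}_1, \mathbf{v}_2 \rangle.
\]
Since $\langle \mathbf{v}_1, \mathbf{v}_2 \rangle \neq 0$ by hypothesis, dividing both sides gives $\lambda_1 \lambda_2 = 1$.

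For the second assertion, if $\mathbf{v}$ is an eigenvector with $Q(\mathbf{v}) \neq 0$, then $\langle \mathbf{v}, \mathbf{v} \rangle = Q(\mathbf{v}) \neq 0$, so I apply the first part with $\mathbf{v}_1 = \mathbf{v}_2 = \mathbf{v}$ and $\lambda_1 = \lambda_2 = \lambda$, concluding $\lambda^2 = 1$ and hence $\lambda = \pm 1$. There is no real obstacle here; the only subtlety is remembering that orthogonality with respect to $Q$ automatically implies orthogonality with respect to the polarized bilinear form, which is standard.
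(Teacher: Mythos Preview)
Your proof is correct and follows essentially the same approach as the paper: both use the identity $\langle \mathbf{v}_1, \mathbf{v}_2 \rangle = \langle A\mathbf{v}_1, A\mathbf{v}_2 \rangle = \lambda_1\lambda_2 \langle \mathbf{v}_1, \mathbf{v}_2 \rangle$ and then specialize to $\mathbf{v}_1 = \mathbf{v}_2$ for the second assertion.
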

\begin{proof}
	The first statement is an easy consequence of the identity
\[
\langle \mathbf{v}_1, \mathbf{v}_2 \rangle =
\langle A\mathbf{v}_1, A\mathbf{v}_2 \rangle =
\langle \lambda_1 \mathbf{v}_1, \lambda_2 \mathbf{v}_2 \rangle =
\lambda_1 \lambda_2 \langle \mathbf{v}_1, \mathbf{v}_2 \rangle,
\]
and the second statement follows from this by letting $\mathbf{v}_1 = \mathbf{v}_2$. 
\end{proof}
\begin{theorem}\label{EigenvectorTheorem}
Let
\[ 
	M = M_{d_1} \cdots M_{d_k}
\]
for some $d_1, \dots, d_k \in \{ 1, 2, 3 \}$. Assume that not all $d_1 = \cdots = d_k = 1$ and not all $d_1 = \cdots = d_k = 3$. Or, equivalently, $M\neq M_1^k$ and $M \neq M_3^k$. Then there exists a real quadratic field extension $K$ of $\mathbb{Q}$, satisfying the following properties: (we denote by $\sigma$ the nontrivial Galois automorphism of $K$ over $\mathbb{Q}$.) 
\begin{enumerate}
	\item One of the eigenvalues for $M$ is, say, $\lambda_1 > 1$ with $\lambda_1 \in K$. Another eigenvalue for $M$ is $\lambda_2 := \lambda_1^{\sigma}$, with $0< \lambda_2 < 1$. The last eigenvalue $\lambda_3 = \det(M)$, which is either $1$ or $-1$.
	\item A (nonzero) eigenvector associated to $\lambda_1$ is $Q$-null, which represents a  point $(\alpha, \beta)$ in $\QQQ$ with $\alpha, \beta\in K$.  We write $\mathbf{v}_1 := (\alpha, \beta, 1)$.
	\item $\mathbf{v}_2 := (\alpha^{\sigma}, \beta^{\sigma}, 1)$ is an eigenvector associated to $\lambda_2$.
	\item $\mathbf{v}_3:= \mathbf{v}_1 \times_Q\mathbf{v}_2$ is an eigenvector associated to $\lambda_3$.
\end{enumerate}
\end{theorem}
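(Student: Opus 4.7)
\medskip

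\noindent\textbf{Proof proposal.}
The plan is to extract the eigenvalue structure algebraically from $M \in O_Q(\mathbb{Z})$, then locate the dominant eigenvector using the Romik dynamics, and finally deduce (3) and (4) formally.

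First, $M$ has integer entries and lies in $O_Q(\mathbb{R})$, so the identity $M^T M_Q M = M_Q$ exhibits $M^{-1}$ as $M_Q$-conjugate to $M^T$, showing that $M$ and $M^{-1}$ share the same characteristic polynomial. Writing $p(x) = x^3 - a x^2 + b x - \epsilon$ with $\epsilon := \det M \in \{-1, 1\}$ and comparing the coefficients of $p(x)$ with those of the characteristic polynomial of $M^{-1}$ (computed from $\{1/\lambda_i\}$) forces $b = \epsilon a$, whence
\[
p(x) \;=\; (x - \epsilon)\bigl(x^2 - c x + 1\bigr), \qquad c \in \mathbb{Z}.
\]
The three eigenvalues of $M$ are therefore $\lambda_3 = \epsilon$ together with the two roots $\lambda_1, \lambda_2$ of $x^2 - c x + 1$, which automatically satisfy $\lambda_1 \lambda_2 = 1$.

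The crux is to show $c > 2$. Each $M_j$ is a continuous self-map of $\QQQ$ whose image is precisely the digit-$j$ sub-arc, so $M$ is a continuous self-map of $\QQQ$. Since $\QQQ$ is homeomorphic to a closed interval, the intermediate value theorem produces a fixed point $(\alpha, \beta) \in \QQQ$ of $M$, represented by $\mathbf{v}_1 := (\alpha, \beta, 1) \in C_Q^+$; this $\mathbf{v}_1$ is an eigenvector of $M$ with eigenvalue $\lambda$ equal to the $x_3$-coordinate of $M\mathbf{v}_1$. A direct computation using the tabulated actions of $H$ and $U_j$ shows that $M_j$ fixes the endpoint vector $(1, 0, 1)$ iff $j = 1$ and fixes $(0, 1, 1)$ iff $j = 3$, so the excluded cases $M = M_1^k, M_3^k$ are exactly the ones for which $(\alpha, \beta)$ could be an endpoint of $\QQQ$; under our hypothesis, $(\alpha, \beta)$ lies in the open interior. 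Writing $M = (HU_{d_1}) \cdots (HU_{d_k})$ and tracking $x_3$ through the $2k$ factors, each $U_{d_i}$ preserves $x_3$, while each subsequent $H$ strictly enlarges it by Proposition~\ref{EmilyLemma}, since after each $U_{d_i}$ the running vector represents a point of $\UUU - \QQQ$ strictly below the line $x + y = 1$. Hence $\lambda > 1$, so $\lambda = \lambda_1$ must be a root of $x^2 - c x + 1$, and $c = \lambda_1 + \lambda_1^{-1} > 2$.

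With $c \geq 3$ an integer, the discriminant $c^2 - 4$ lies strictly between $(c-1)^2$ and $c^2$, hence is a positive non-square. Thus $K := \mathbb{Q}(\sqrt{c^2 - 4})$ is a real quadratic field, $\lambda_1 = (c + \sqrt{c^2 - 4})/2 \in K$ with Galois conjugate $\lambda_2 = \lambda_1^\sigma = 1/\lambda_1 \in (0, 1)$, and $\lambda_3 = \epsilon$. Since $\lambda_1$ is a simple eigenvalue, $\ker(M - \lambda_1 I)$ is a one-dimensional $K$-subspace of $K^3$; normalizing the third coordinate to $1$ forces $\alpha, \beta \in K$, establishing (1) and (2). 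For (3), applying $\sigma$ entrywise to $M \mathbf{v}_1 = \lambda_1 \mathbf{v}_1$ and using that $M$ is fixed by $\sigma$ yields $M \mathbf{v}_1^\sigma = \lambda_2 \mathbf{v}_1^\sigma$, with $\mathbf{v}_1^\sigma = (\alpha^\sigma, \beta^\sigma, 1) = \mathbf{v}_2$. For (4), Proposition~\ref{OrthogonalActionCrossProduct} combined with $\lambda_1 \lambda_2 = 1$ yields
\[
\mathbf{v}_3 \;=\; M\mathbf{v}_1 \times_Q M\mathbf{v}_2 \;=\; \epsilon\, M \mathbf{v}_3,
\]
so $M \mathbf{v}_3 = \epsilon \mathbf{v}_3 = \lambda_3 \mathbf{v}_3$.

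The principal obstacle is the strict lower bound $\lambda_1 > 1$: $Q$-orthogonality alone yields only the reciprocal-pair structure $\{\lambda_1, \lambda_1^{-1}\}$, and external dynamical input is needed to rule out the elliptic (unit-circle complex) and negative-hyperbolic possibilities. This input is precisely Proposition~\ref{EmilyLemma} combined with the explicit verification that $M_1^k$ and $M_3^k$ are the only products fixing an endpoint of $\QQQ$. All remaining steps are essentially formal.
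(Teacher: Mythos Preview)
Your proof is correct and takes a genuinely different route from the paper's. The paper finds \emph{two} fixed points geometrically---one for $M$ acting on $\QQQ$ and one for $M^{-1}$ acting on the complementary arc of $\UUU$---obtaining linearly independent $Q$-null eigenvectors $\mathbf{v}_1,\mathbf{v}_2$ directly; it then invokes Lemma~\ref{Eigenvalues} to get $\lambda_1\lambda_2=1$, and finally reads off the quadratic irrationality from the integrality of the characteristic polynomial. You instead extract the factorization $p(x)=(x-\epsilon)(x^2-cx+1)$ \emph{a priori} from $M\in O_Q(\mathbb{Z})$, use a single fixed-point argument to locate $\mathbf{v}_1$ with $\lambda_1>1$, and then manufacture $\mathbf{v}_2$ by Galois conjugation rather than by a second dynamical argument. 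Your route is more algebraic and makes the irrationality of $\lambda_1$ completely explicit via the pleasant observation $(c-1)^2<c^2-4<c^2$, whereas the paper's argument is terser on this point; the paper's route, by contrast, yields the geometric location of $\mathbf{v}_2$ (outside $\QQQ$) without appealing to $\sigma$, which it then uses in the proof of Corollary~\ref{Galois}.

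Two places where your exposition is slightly compressed but easily repaired: the claim that $(\alpha,\beta)$ lies in the open interior of $\QQQ$ really rests on $M\cdot\QQQ=C(d_1,\dots,d_k)$ meeting $\{(1,0),(0,1)\}$ only when all $d_j$ agree, which follows by a short induction from your observation about $M_j$ fixing endpoints (or directly from the cylinder-set description, as the paper does); and in your $x_3$-tracking argument you are implicitly using that each partial product $M_{d_{k-i+1}}\cdots M_{d_k}\cdot(\alpha,\beta)$ remains in the \emph{interior} of $\QQQ$, so that the next $U_{d_{k-i}}$ genuinely pushes it strictly below $x+y=1$. Both are routine once stated.
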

\begin{proof}
For any finite digit sequence $d_1, \dots d_k$, let us define \emph{the cylinder set}
\[
C(d_1, \dots, d_k) = \{ (x, y) \in \QQQ \mid d(T^{j-1}(x, y)) = d_j \text{ for }j = 1, \dots, k \}.
\]
Then $C(d_1, \dots, d_k)$ is just the closed sub-arc of $\QQQ$ whose endpoints are represented by
\[
M\begin{pmatrix}0 \\ 1\\1 \\ \end{pmatrix}
\qquad
\text{ and }
\qquad
M\begin{pmatrix}1 \\ 0\\1 \\ \end{pmatrix}.
\]
In particular, $C(d_1, \dots, d_k)$ is contained in the \emph{interior} of $\QQQ$ unless all of $d_j$ are $1$, or all of $d_j$ are $3$.  

Consider the continuous map from the unit circle $\UUU$ to itself, defined by
\[
(x, y) \mapsto 
\begin{pmatrix}x \\ y\\1 \\ \end{pmatrix}\mapsto 
M\begin{pmatrix}x \\ y\\1 \\ \end{pmatrix}
=
\begin{pmatrix}v_1 \\ v_2\\v_3 \\ \end{pmatrix}
\mapsto \left(\frac{v_1}{v_3}, \frac{v_2}{v_3}\right).
\]
This map sends $\QQQ$ to $C(d_1, \dots, d_k)$, therefore, must have at least one fixed point, say, $(\alpha, \beta)$ in $C(d_1, \dots, d_k)$.
Then $\mathbf{v}_1:= (\alpha, \beta, 1)$ is an eigenvector of $M$. 
Call its eigenvalue $\lambda_1$, then $\lambda_1 > 1$ because $M$ must increase the $x_3$-coordinates of the vectors representing any point in $\QQQ$. (See Figure~\ref{FunnelVertex}.)
Applying the same argument to the complement of $C(d_1, \dots, d_k)$ and $M^{-1}$, we see that there is another eigenvector $\mathbf{v}_2 :=(\alpha', \beta', 1)$ with $(\alpha', \beta')\not\in\QQQ$ associated to an eigenvalue, say, $\lambda_2$.

So far, we found two linearly independent eigenvectors $\mathbf{v}_1, \mathbf{v}_2$ of $M$, both of which are $Q$-null. 
Note that $\langle \mathbf{v}_1, \mathbf{v}_2 \rangle \neq 0$, otherwise the $\mathbb{R}$-linear span of $\mathbf{v}_1$ and $\mathbf{v}_2$ would be a subset of the $Q$-null set $C_Q$, contradicting the nondegeneracy of $Q(\mathbf{x})$.
Then Lemma~\ref{Eigenvalues} shows that $\lambda_1\lambda_2 = 1$.  
Moreover, Proposition~\ref{OrthogonalActionCrossProduct} implies that $\mathbf{v}_3:= \mathbf{v}_1 \times_Q \mathbf{v}_2$ is an eigenvector associated with the third eigenvalue $\lambda_3 = \det(M) = \pm 1$. 
Observe that the characteristic polynomial of $M$ is a monic polynomial with integer coefficients and its constant term is $\det(M)=\pm1$. So, $\lambda_1$ and $\lambda_2$ must be quadratic irrationals and Galois conjugates one another. The remaining statements in the theorem follow from this easily.
\end{proof}

Even though it is not needed for our proof of Lagrange's theorem, we obtain as a corollary to Theorem~\ref{EigenvectorTheorem} an analogue of Galois' theorem for the Romik system.
Recall that a classical theorem by Galois \cite{Per29} relates the continued fraction expansion of a quadratic irrational to that of its Galois conjugate. 

\begin{corollary}[Galois' Theorem for the Romik system]\label{Galois}
Suppose that
\[
P = (\alpha, \beta) = [\overline{d_1, \dots, d_k}]_{\QQQ},
\]
and write $P^{\sigma} = (\alpha^{\sigma}, \beta^{\sigma})$ for its Galois conjugate. Then
\[
\begin{cases}
\alpha^{\sigma} > 0\text{ and } \beta^{\sigma} < 0 & \text{ if } d_k=1, \\
\alpha^{\sigma} < 0\text{ and } \beta^{\sigma} < 0 & \text{ if } d_k=2, \\
\alpha^{\sigma} < 0 \text{ and } \beta^{\sigma} > 0 & \text{ if } d_k=3. \\
\end{cases}
\]
Moreover, 
\[
(|\alpha^{\sigma}|, |\beta^{\sigma}|) = [\overline{d_{k-1}, d_{k-2}, \dots, d_2, d_1, d_k}]_{\QQQ}.
\]
\end{corollary}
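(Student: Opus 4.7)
The plan is to reduce the statement to Theorem~\ref{EigenvectorTheorem} by finding an algebraic relation between the matrix $M := M_{d_1} M_{d_2} \cdots M_{d_k}$, which by that theorem has $\mathbf{v}_1 := (\alpha, \beta, 1)$ and $\mathbf{v}_2 := (\alpha^{\sigma}, \beta^{\sigma}, 1)$ as $Q$-null eigenvectors with eigenvalues $\lambda_1 > 1$ and $\lambda_2 = 1/\lambda_1 \in (0, 1)$ respectively, and the matrix
\[
N := M_{d_{k-1}} M_{d_{k-2}} \cdots M_{d_1} M_{d_k}
\]
associated with the reversed cyclic period $(d_{k-1}, d_{k-2}, \ldots, d_1, d_k)$.

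The key identity I would establish first is
\[
N = U_{d_k}\, M^{-1}\, U_{d_k}.
\]
This is a short telescoping calculation using $M_j = H U_j$, $M_j^{-1} = U_j H$, and $U_{d_k}^2 = I$: starting from $U_{d_k} M^{-1} U_{d_k} = U_{d_k}(U_{d_k}H)(U_{d_{k-1}}H)\cdots(U_{d_1}H)U_{d_k}$, the leftmost pair $U_{d_k}U_{d_k}$ collapses and the remaining factors regroup as $(HU_{d_{k-1}})(HU_{d_{k-2}}) \cdots (HU_{d_1})(HU_{d_k}) = N$.

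Once this identity is in hand, the rest proceeds almost mechanically. From $N = U_{d_k} M^{-1} U_{d_k}$, the vector $U_{d_k}\mathbf{v}_2$ is a $Q$-null eigenvector of $N$ with eigenvalue $\lambda_1 > 1$, while $U_{d_k}\mathbf{v}_1$ corresponds to eigenvalue $\lambda_2 < 1$. The hypothesis that $(d_1,\ldots,d_k)$ is not identically $1$ or $3$ is inherited by its reversed cyclic shift, so Theorem~\ref{EigenvectorTheorem} applies to $N$ and pins down \emph{which} of the two $Q$-null eigenvectors projects to $\QQQ$: it is the one with eigenvalue exceeding $1$, namely $U_{d_k}\mathbf{v}_2$. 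Thus $U_{d_k}\cdot P^{\sigma} \in \QQQ$. Unwinding the action of $U_{d_k}$ case by case on $(\alpha^{\sigma}, \beta^{\sigma})$ for $d_k\in\{1,2,3\}$ forces precisely the sign patterns claimed for $\alpha^{\sigma}$ and $\beta^{\sigma}$, and gives $U_{d_k}\cdot P^{\sigma} = (|\alpha^{\sigma}|, |\beta^{\sigma}|)$. The expansion claim is then immediate: this point, being a $\QQQ$-fixed point of $N = M_{d_{k-1}} \cdots M_{d_1} M_{d_k}$, has purely periodic Romik expansion $[\overline{d_{k-1}, d_{k-2}, \ldots, d_1, d_k}]_{\QQQ}$ by the same cylinder argument used inside the proof of Theorem~\ref{EigenvectorTheorem}.

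The main obstacle is spotting the conjugation identity $N = U_{d_k}\,M^{-1}\,U_{d_k}$; once it is written down, the calculation is routine and the sign trichotomy is just a matter of reading off the three sign patterns from the three diagonal matrices $U_1, U_2, U_3$. A subsidiary care is needed to invoke the right part of Theorem~\ref{EigenvectorTheorem}—namely, the assertion that the eigenvector with eigenvalue greater than $1$ is the one landing in $\QQQ$—so that we can identify $U_{d_k}\cdot P^{\sigma}$ (and not $U_{d_k}\cdot P$, which lies in one of the other three quadrants since $\alpha,\beta>0$) as the point of $\QQQ$ we want.
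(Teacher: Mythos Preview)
Your proposal is correct and follows essentially the same approach as the paper. The paper manipulates the eigenvalue equation $M\mathbf{v}_1=\lambda_1\mathbf{v}_1$ directly---conjugating by $\sigma$, then left-multiplying by $U_{d_k}$ and regrouping (with an implicit inversion)---to arrive at $M'\mathbf{v}'=\lambda_1\mathbf{v}'$ with $M'=M_{d_{k-1}}\cdots M_{d_1}M_{d_k}$ and $\mathbf{v}'=U_{d_k}\mathbf{v}_1^{\sigma}$; you package the same telescoping computation as the clean conjugation identity $N=U_{d_k}M^{-1}U_{d_k}$ and then read off the eigenvector, which is arguably tidier but substantively identical.
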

\begin{proof}
Let $\mathbf{v}_1 = (\alpha, \beta, 1)$ as in Theorem~\ref{EigenvectorTheorem} and write
\[
	M_{d_1} \cdots M_{d_k} \mathbf{v}_1 = \lambda_1 \mathbf{v}_1.
\]
Conjugating this equation with $\sigma$ and then rewriting this using \eqref{MandHU} and \eqref{MInverseandHU},
\begin{equation}\label{UkTrick}
	HU_{d_1} HU_{d_2} \cdots \cdots HU_{d_k} \mathbf{v}_1^{\sigma} = 
	\lambda_1^{\sigma} \mathbf{v}_1^{\sigma}.
\end{equation}
Now, let $\mathbf{v}' := U_{d_k} \mathbf{v}_1^{\sigma}$ and multiply \eqref{UkTrick} by $U_{d_k}$ (from the left) to obtain
\begin{equation}\label{Mprime}
	M_{d_{k-1}}M_{d_{k-2}} \cdots M_{d_2}M_{d_1}M_{d_k} \mathbf{v}' = \lambda_1 \mathbf{v}'.
\end{equation}
Here, we use the fact $\lambda_1^{\sigma} = 1/\lambda_1$ from Theorem~\ref{EigenvectorTheorem}.
The equation \eqref{Mprime} then shows that $\mathbf{v}'$ is an eigenvector of 
\[
M':=M_{d_{k-1}}M_{d_{k-2}} \cdots M_{d_2}M_{d_1}M_{d_k},
\]
associated with the eigenvalue $\lambda_1 > 1$. From Theorem~\ref{EigenvectorTheorem} again, we conclude that the $x_1$- and $x_2$-coordinates of $\mathbf{v}'$ are positive. 
Thus, $U_{d_k} \mathbf{v}_1^{\sigma}$ has positive $x_1$- and $x_2$-coordinates, so that $U_{d_k} \mathbf{v}_1^{\sigma} = (|x^{\sigma}|, |y^{\sigma}|, 1)$.
This completes the proof.
\end{proof}

\subsection{Periodicity is necessary for quadratic irrationality}
Let $P= (\alpha,\beta)\in\QQQ$ be defined over a (real) quadratic field $K= \mathbb{Q}(\sqrt D)$. 
We prove that the Romik digit expansion of $P$ is eventually periodic, completing the proof of Lagrange's theorem for the Romik system (Theorem~\ref{LagrangeTheorem}).
Note that, if $P$ is rational, its Romik digit expansion ends with infinite succession of 1's or 3's, thus is periodic. 
So, we assume in what follows that $P$ is quadratic irrational.

We fix $\mathbf{v}_0 = (v_1, v_2, v_3)\in (\mathcal{O}_K)^3$ representing $P$.
And define $\mathbf{v}_n$ for each $n\ge1$ to be
\[
\mathbf{v}_n
=
M_{d_n}^{-1} \cdots M_{d_1}^{-1}
\mathbf{v}_0,
\]
so that $\mathbf{v}_n$ represents $T^n(P)$ for all $n\ge 1$. 
Finally, we define
\[
\mathbf{w}_n = 
\mathbf{v}_n \times_Q \mathbf{v}_n^{\sigma},
\]
for all $n\ge0$.
Recall that $\det(M_1) = \det(M_3) = 1$ and $\det(M_2) = -1$. So,
if we denote by $\epsilon_n$ the count of how many times the digit 2 appears in the sequence $\{d_1, \dots, d_n\}$, 
then $\det(M_{d_n}^{-1} \cdots M_{d_1}^{-1}) = (-1)^{\epsilon_n}$,
and
Proposition~\ref{OrthogonalActionCrossProduct} shows 
\begin{equation}\label{WnSequence}
	\mathbf{w}_n =  (-1)^{\epsilon_n}M_{d_n}^{-1} \cdots M_{d_1}^{-1}\mathbf{w}_0,
\end{equation}
for each $n\ge1$.
Note that $\mathbf{v}_0$ and $\mathbf{v}_0^{\sigma}$ are linearly independent because they represents two distinct points $(\alpha, \beta)$ and $(\alpha^{\sigma}, \beta^{\sigma})$. Therefore, $\mathbf{w}_0$ is nonzero and all $\mathbf{w}_n$ are thus nonzero as well.

Let $W = Q(\mathbf{w}_0)>0$ and define the one-sheeted hyperboloid $\mathcal{H}(W)$ as in \eqref{Hyperboloid}.
Then \eqref{WnSequence} shows $\mathbf{w}_n\in\mathcal{H}(W)$ for all $n\ge0$.
In addition, thanks to Proposition~\ref{LatticeTheorem}, $\{ \mathbf{w}_n\}_{n=0}^{\infty}$ is a discrete subset of $\mathcal{H}(W)$. 

We claim further that $\{ \mathbf{w}_n \}_{n=0}^{\infty}$ is a \emph{bounded} subset of $\mathcal{H}(W)$, consequently a finite set. 
This will prove that the periodicity is a necessary condition for the quadratic irrationality, because then at least two $\mathbf{v}_n$'s, say, $\mathbf{v}_{j_1}$ and $\mathbf{v}_{j_2}$ with $j_1 < j_2$ represent the same point in $\QQQ$, therefore
\[
	(\alpha, \beta) = [d_1, \dots, d_{j_1 - 1},
	\overline{d_{j_1}, d_{j_1 + 1}, \dots, d_{j_2}}]_{\QQQ}.
\]
We now proceed to proving the claim that $\{ \mathbf{w}_n\}_{n=0}^{\infty}$ is bounded.

It will be convenient to denote by $x_3(\mathbf{w})$ the $x_3$-coordinate of $\mathbf{w}$. 
The boundedness of $\{ \mathbf{w}_n\}_{n=0}^{\infty}$ is proven by
\begin{description}
	\item[Step 1] There exists a (large) constant $R>0$ which depends only on $D$ and $W$ such that, if
		\begin{equation}\label{WnCondition}
			|x_3(\mathbf{w}_n)| > R
\end{equation}
then
\begin{equation}\label{InductiveStep}
	|x_3(\mathbf{w}_{n+1})| < |x_3(\mathbf{w}_n)|.
\end{equation}
In other words, if $\mathbf{w}_n$ lies outside of a certain bounded set then $\mathbf{w}_{n+1}$ is closer to the origin (see \eqref{WnSequence}.)
	\item[Step 2] If 
		\begin{equation}\label{WnConditionII}
			|x_3(\mathbf{w}_n)| \le R
\end{equation}
then 
\begin{equation}\label{FinalStep}
	|x_3(\mathbf{w}_{n+1})| \le 7\sqrt2 R.
\end{equation}
In other words, if $\mathbf{w}_n$ lies \emph{inside} of the same bounded set then $\mathbf{w}_{n+1}$ cannot escape too far away.
\end{description}
It is easy to deduce the boundedness from Steps 1 and 2 above.
If $\mathbf{w}_n$ satisfies \eqref{WnCondition} for some $n$, 
then, the discreteness of $\{ \mathbf{w}_n \}_{n=0}^{\infty}$ shows that, applying Step 1 finitely many times, we obtain $|x_3(\mathbf{w}_{n+k})| \le R$ for some $k$. 
Then we apply Step 2 and then Step 1 repeatedly, to see that the sequence 
\[
	\mathbf{w}_{n+k}, 
	\mathbf{w}_{n+k+1}, 
	\mathbf{w}_{n+k+2},  \dots
\]
must lie in a bounded set $\{|x_3(\mathbf{w})| \le 7\sqrt2 R\}.$ This shows that $\{ \mathbf{w}_n\}_{n=0}^{\infty}$ is bounded.

To prove Step 1, we begin by noting that any point $(w_1, w_2, w_3)$ in the hyperboloid $\mathcal{H}(W)$ must lie close to $C_Q$ if $|w_3|$ is large. 
Indeed, the distance between the sections (the two concentric circles in Figure~\ref{MainPictureI}) of $C_Q$ and $\mathcal{H}(W)$ by the plane $x_3=w_3$ is
\[
\sqrt{w_3^2 + W} - |w_3| =
\frac{W}{\sqrt{w_3^2 + W} + |w_3|},
\]
which can be made arbitrarily small when $|w_3|$ is large.
In particular, we choose $R > 0$ large enough so that 
\begin{itemize}
	\item[($R1$)] $R > |c_0|\sqrt D/2$, and
	\item[($R2$)] $\sqrt{R^2 + W} - R < \sqrt D/2$. 
\end{itemize}
Note from \eqref{WnSequence} that
\begin{equation}\label{LastTrickWn}
	\mathbf{w}_{n+1} = \pm M_{d_{n+1}}^{-1} \mathbf{w}_n = \pm U_{d_{n+1}}H\mathbf{w}_n,
\end{equation}
and assume that $\mathbf{w}_n$ satisfies \eqref{WnCondition} above.
Then \eqref{InductiveStep} would follow from Proposition~\ref{EmilyLemma} if we show that $\mathbf{w}_n$ meets the condition \eqref{ConditionDecrease} (where $\mathbf{w}_n$ will play the role of $\mathbf{v}$ in Proposition~\ref{EmilyLemma}.)

We know from Proposition~\ref{LatticeTheorem} that
there exist $a_n, b_n, c_n \in \mathbb{Z}$ such that $\mathbf{w}_n = (\sqrt D/2)(a_n, b_n, c_n)$ for each $n\ge0$.
Let us treat the case $c_n>0$ first, as the other case is similar. 
Consider the section of $\mathcal{H}(W)$ by $x_3 = c_n\sqrt D/2$ as in Figure~\ref{MainPicture}. 
In order to verify \eqref{ConditionDecrease} for $\mathbf{w}_n$, we invoke 
Proposition~\ref{TwistedCrossProductIsClose} which says in this case that $\mathbf{w}_n$ must lie in either
\begin{enumerate}[(i)]
	\item $\{ (w_1, w_2, w_3)\in \mathcal{H}(W) \mid w_3 = c_n\sqrt D/2, 0<w_1, w_2\le c_n\sqrt D/2\}$ (the thick curve in Figure~\ref{MainPicture}), or
	\item $\{ (w_1, w_2, w_3)\in \mathcal{H}(W) \mid w_3 = c_n\sqrt D/2, 
			w_1 > c_n\sqrt D/2
			\text{ or }
			w_2 > c_n\sqrt D/2
		\}$ (the dashed curve in Figure~\ref{MainPicture}.)
\end{enumerate}
\begin{figure}
\begin{center}
\begin{tikzpicture}
\draw[->] (-3,0)--(5,0) node[right]{$x_1$};
\draw[->] (0,-3)--(0,3) node[above]{$x_2$};
\node at (4, 2.5) {$x_3 = c_n\sqrt D/2$};
\node[below left] at (0, 0) {$O$};

\draw[dotted] (0, 0) circle (2);
\draw[dotted] (0, 0) circle (2.2);

\draw[dashed] (-2.2, 2) -- (2.2, 2);
\draw[dashed] (2, -2.2) -- (2, 2.2);

\draw[thick, dashed] ([shift=(-23:2.2)]0, 0) arc (-23:116:2.2);
\draw[ultra thick] ([shift=(24:2.2)]0, 0) arc (24:65:2.2);

\draw[->] (-2.2, 1) -- (-2.2, 2);
\draw[->] (-2.2, 3.2) -- (-2.2, 2.2);
\node[right] at (-2.2, 2.2) {$<\sqrt D/2$};
\node[below left] at (2, 0) {$c_n\sqrt D/2$};
\node[below right] at (2.2, 0) {$\sqrt{c_n^2D/4 + W}$};
\end{tikzpicture}
\end{center}
\caption{The sections of $C_Q$ (the inner circle) and of $\mathcal{H}(W)$ (the outer one) by $x_3 = c_n\sqrt D/2$}
\label{MainPicture}
\end{figure}
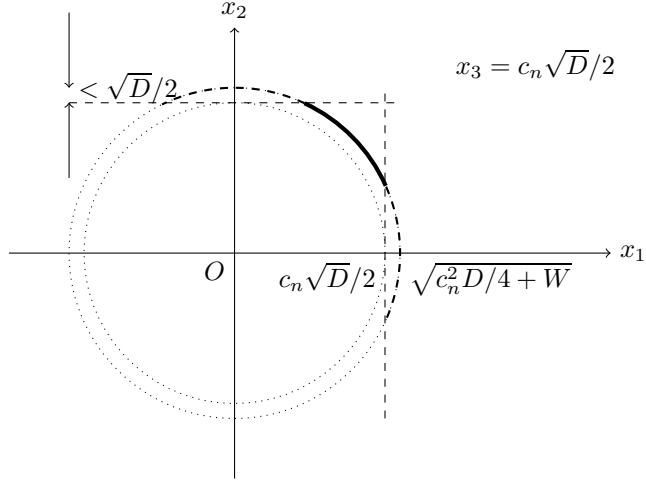
If $\mathbf{w}_n$ lies in (i) then \eqref{ConditionDecrease} is easily verified.
We show that $\mathbf{w}_n$ cannot lie in (ii), which will then complete the proof of Step 1.
From \eqref{WnCondition} and the condition ($R2$), we see that
\[
	\sqrt{\left(\frac{\sqrt D c_n}2\right)^2 + W} - \frac{\sqrt D c_n}2 < \sqrt{R^2 + W} - R < \sqrt D/2.
\]
So, if $a_n\sqrt D/2 > c_n \sqrt D/2$ then
\[
	c_n \sqrt D/2 < a_n \sqrt D/2 < (c_n + 1) \sqrt D/2,
\]
which is impossible because $a_n$ and $c_n$ are integers. 
Likewise it is impossible to have $b_n\sqrt D/2 > c_n \sqrt D/2$. 
This proves that $\mathbf{w}_n$ cannot lie in (ii).

We now move on to proving Step 2. Writing $\mathbf{w}_n = (w_1, w_2, w_3)$, the condition \eqref{WnConditionII} implies
\[
	w_1^2 + w_2^2 = w_3^2 + R \le R^2+ R,
\]
so that
\[
	|w_1| \le \sqrt{R^2 + R} \le \sqrt2R,
\]
and similarly, $|w_2| \le \sqrt2R.$
Now, we use \eqref{LastTrickWn}, the triangle inequality and the simple calculation
\[
	H \begin{pmatrix} w_1 \\ w_2 \\ w_3 \end{pmatrix}
	=
\begin{pmatrix} *  \\ * \\ -2w_1 - 2w_2 + 3w_3 \end{pmatrix}
\]
together to deduce \eqref{FinalStep}.
This completes Step 2, thus the proof of the claim that $\{\mathbf{w}_n\}$ is bounded.

\section{Some open problems}\label{SecOpen}
\subsection{Berggren graphs over real quadratic fields}
Fix a (real) quadratic extension $K$ of $\mathbb{Q}$.
Can we find Berggren trees defined over $K$, similar to Figure~\ref{PythagoreanTree}, namely, a collection of triples $(x, y, z)$ with $x^2 + y^2 = z^2$ where $x, y, z$ are positive (coprime) $K$-integers with the edges showing the actions of $M_1, M_2. M_3$?

\begin{figure}
\begin{center}
\tikzset{doublearrow/.pic={
    \draw (-0.1, 0) -- (-0.2, 0.5) node[above]{$\vdots$};
    \draw (0.1, 0) -- (0.2, 0.5) node[above]{$\vdots$};
  }
}
\tikzset{doubleRarrow/.pic={
    \draw (-0.1, 0) -- (-0.2, 0.5) node[right]{$\cdots$};
    \draw (0.1, 0) -- (0.2, 0.5) node[right]{$\cdots$};
  }
}
\tikzset{doubleLarrow/.pic={
    \draw (-0.1, 0) -- (-0.2, 0.5) node[left]{$\cdots$};
    \draw (0.1, 0) -- (0.2, 0.5) node[left]{$\cdots$};
  }
}
\begin{tikzpicture}[->, >=latex', auto, scale=2]
\node (classk-2) at (-1, 0) {};
\node (classk-1) at (-1.5, 1) {$[x_{k-1}, y_{k-1}, z_{k-1}]$};
\node (classk) at (-1, 2) {$[x_{k}, y_{k}, z_{k}]$};
\node (class1) at (1, 2) {$[x_{1}, y_{1}, z_{1}]$};
\node (class2) at (1.5, 1) {$[x_{2}, y_{2}, z_{2}]$};
\node (class3) at (1, 0) {};
\node (base) at (0, 0) {$\cdots$};

\draw (classk-1) to node[font=\footnotesize]{${M_{d_{k-2}}}$} (classk-2);
\draw (classk) to node[font=\footnotesize]{${M_{d_{k-1}}}$} (classk-1);
\draw (class1) to node[font=\footnotesize]{${M_{d_k}}$} (classk);
\draw (class2) to node[font=\footnotesize]{${M_{d_1}}$} (class1);
\draw (class3) to node[font=\footnotesize]{${M_{d_2}}$} (class2);

\pic at (-1, 2.1) {doublearrow};
\pic at (1, 2.1) {doublearrow};
\pic at (1.9, 1) [rotate=-90] {doubleRarrow};
\pic at (-2.3, 1) [rotate=90] {doubleLarrow};
\end{tikzpicture}
\end{center}
\caption{Purely periodic points form a circular root. \label{CircularRoot}}
\end{figure}
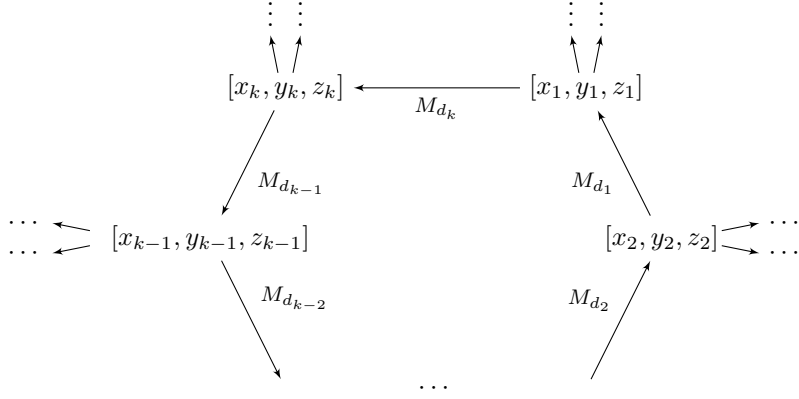
When we consider $K$-integer triples $(x, y, z)$, it is convenient to remove the ambiguity stemming from the existence of \emph{units} in $K$.
As is well-known, there exists a \emph{fundamental unit}, say, $\epsilon_K > 1$, such that the set of all the units of $K$ are given by $\{ \pm {\epsilon_K}^{\mathbb{Z}}\}$.
(See, for example, Proposition 13.1.6 in \cite{IR90}.) Let us denote by $[x, y, z]$ the equivalent class defined by the relation
\[
(x, y, z) \sim (x', y', z') \Longleftrightarrow (x, y, z) = \epsilon_K^k (x', y', z')
\]
for some $k\in \mathbb{Z}$. 
Note that any triple in the class $[x, y, z]$ represents a unique point $(x/z, y/z) \in\QQQ$. Write its Romik digit expansion
\[
(x/z, y/z) = [d_1', \dots, d_t', \overline{d_1, \dots, d_k}]_{\QQQ}.
\]
Then, our results in \S\ref{SecProof} show that the class $[x, y, z]$ belongs to a unique Berggren tree that grows out of a \emph{circular root}, as pictured in Figure~\ref{CircularRoot}.
Here the points $(x_1/z_1, y_1/z_1), \dots, (x_k/z_k, y_k/z_k)$ have the Romik digit expansions
\begin{align*}
(x_1/z_1, y_1/z_1) &= [\overline{d_1, \dots, d_k}]_{\QQQ}, \\
(x_2/z_2, y_2/z_2) &= [\overline{d_2, d_3, \dots, d_k, d_1}]_{\QQQ}, \\
\dots & \\
(x_k/z_k, y_k/z_k) &= [\overline{d_k, d_1, d_2,  \dots, d_{k-1}}]_{\QQQ}.
\end{align*}
In other words, a tree growing out of the circular root will contain all the triples whose Romik digit expansions share the same period, up to circular permutation. 
Perhaps, a better nomenclature in this situation is a \emph{Berggren graph}, as it is no longer a tree in the graph-theoretic sense.
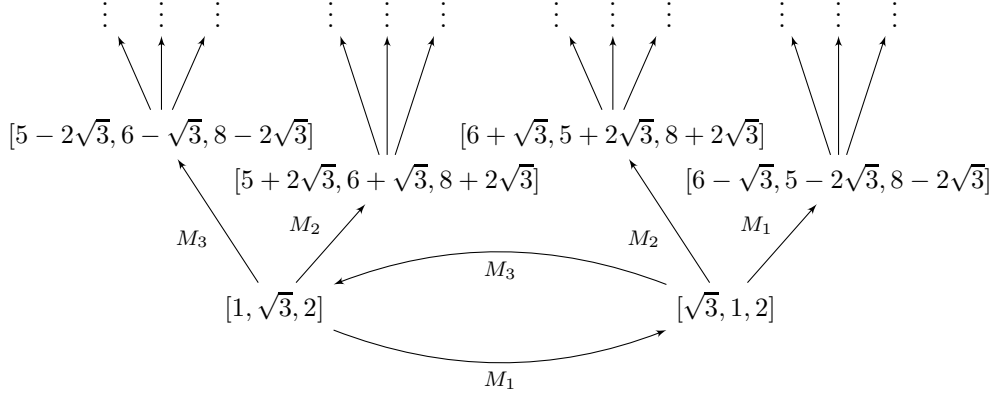
\begin{figure}
\begin{center}
\begin{tikzpicture}[->, >=latex', auto, xscale=1.5]
\node (base1) at (2, 0) {$[1, \sqrt 3, 2]$};
\node (base2) at (6, 0) {$[\sqrt3, 1, 2]$};

\node (1f1) at (1, 2.3) {$[5 - 2\sqrt3, 6 - \sqrt 3, 8 - 2\sqrt3]$};
\node (1f2) at (3, 1.7) {$[5 + 2\sqrt3, 6 + \sqrt 3, 8 + 2\sqrt3]$};
\node (1f3) at (5, 2.3) {$[6 + \sqrt 3, 5 + 2\sqrt3, 8 + 2\sqrt3]$};
\node (1f4) at (7, 1.7) {$[6 - \sqrt 3, 5 - 2\sqrt3, 8 - 2\sqrt3]$};

\node (21f1) at (0.5, 4) {$\vdots$};
\node (22f1) at (1.0, 4) {$\vdots$};
\node (23f1) at (1.5, 4) {$\vdots$};

\node (21f2) at (2.5, 4) {$\vdots$};
\node (22f2) at (3.0, 4) {$\vdots$};
\node (23f2) at (3.5, 4) {$\vdots$};

\node (21f3) at (4.5, 4) {$\vdots$};
\node (22f3) at (5.0, 4) {$\vdots$};
\node (23f3) at (5.5, 4) {$\vdots$};

\node (21f4) at (6.5, 4) {$\vdots$};
\node (22f4) at (7.0, 4) {$\vdots$};
\node (23f4) at (7.5, 4) {$\vdots$};

\draw (base1) [bend right] to node[below, font=\footnotesize]{${M_1}$} (base2);
\draw (base2) [bend right] to node[font=\footnotesize] {${M_3}$} (base1);

\draw (base1) to node[font=\footnotesize]{${M_3}$} (1f1);
\draw (base1) to node[font=\footnotesize]{${M_2}$} (1f2);

\draw (base2) to node[font=\footnotesize]{${M_2}$} (1f3);
\draw (base2) to node[font=\footnotesize]{${M_1}$} (1f4);

\draw (1f1) to (21f1); 
\draw (1f1) to (22f1); 
\draw (1f1) to (23f1); 

\draw (1f2) to (21f2); 
\draw (1f2) to (22f2); 
\draw (1f2) to (23f2); 

\draw (1f3) to (21f3); 
\draw (1f3) to (22f3); 
\draw (1f3) to (23f3); 

\draw (1f4) to (21f4); 
\draw (1f4) to (22f4); 
\draw (1f4) to (23f4); 
\end{tikzpicture}
\end{center}
\caption{An example of Berggren graph in $\mathbb{Q}(\sqrt 3)$, rooted at $[1, \sqrt3, 2]$ and $[\sqrt3, 1, 2]$. \label{BerggrenGraph}}
\end{figure}
An example of a Berggren graph in $\mathbb{Q}(\sqrt3)$ rooted at $[1, \sqrt3, 2]$ and $[\sqrt3, 1, 2]$ is provided in Figure~\ref{BerggrenGraph}.

The upshot of our discussion so far is the following. In order to answer the question posted at the beginning of this section, we must find all the circular roots defined over $K$, from which we then grow Berggren graphs to obtain the entire integer $K$-triples.
Therefore, a natural question to ask is, \textit{does there exist an algorithm for finding all the purely periodic points in $\QQQ$ defined over $K$?} 
Relatedly, here is another question, perhaps a bit weaker. For a finite Romik digit sequence $\vec d = [d_1, \dots, d_k]$ with $d_j \in \{1, 2, 3 \}$, define $(x(\vec d), y(\vec d)) \in \QQQ$ to be the point whose Romik digit expansion is purely periodic with $\vec d$ being its period, namely,
\[
(x(\vec d), y(\vec d)) = [\overline{d_1, \dots, d_k}]_{\QQQ}.
\]
Then, does the number
\[
N(k, K) := \# \{ \vec d \in \{ 1, 2, 3 \}^k \mid x(\vec d), y(\vec d) \in K \}
\]
of such points \emph{that are defined over} $K$ tend to infinity as $k\to\infty$?
If so, how fast? Does the growth rate depend on (the discriminant of) $K$?

\subsection{Romik systems defined by other quadratic forms}
Another way to generalize the work in this paper is to consider Romik systems stemming from other quadratic forms considered in \cite{CNT}.
We believe that, for certain forms such as $Q(x, y, z) = x^2 + xy + y^2 - z^2$, much of the arguments presented here will carry over essentially in the same way.
But, we don't expect that every quadratic form (with a Berggren tree) would always yield the same result.
Particularly interesting to us is the form $Q(x, y, z, w) = x^2 + y^2 + z^2 - w^2$ for \emph{Pythagorean quadruples}.
This quadratic form and its Berggren tree have been recently studied by Chaubey et al in \cite{CFHS17} in a slightly different context.
Will a Romik system arising from its Berggren trees consisting of Pythagorean quadruples have similar properties as described in this paper? Will there be any interesting arithmetical consequences?


\begin{bibdiv}
\begin{biblist}
\bib{Alp05}{article}{
   author={Alperin, Roger C.},
   title={The modular tree of Pythagoras},
   journal={Amer. Math. Monthly},
   volume={112},
   date={2005},
   number={9},
   pages={807--816},
   issn={0002-9890},
   review={\MR{2179860}},
   doi={10.2307/30037602},
}
\bib{Bar63}{article}{
   author={Barning, F. J. M.},
   title={On Pythagorean and quasi-Pythagorean triangles and a generation
   process with the help of unimodular matrices},
   language={Dutch},
   journal={Math. Centrum Amsterdam Afd. Zuivere Wisk.},
   volume={1963},
   date={1963},
   number={ZW-011},
   pages={37},
   review={\MR{0190077}},
}
\bib{CFHS17}{article}{
author={Chaubey, Sneha},
author={Fuchs, Elena},
author={Hines, Robert},
author={Stange, Katherine E.},
title={The Dynamics of Super-Apollonian Continued Fractions},
date={2017},
   journal={Trans. Amer. Math. Soc.},
   status={forthcoming},
eprint={https://arxiv.org/abs/1703.08616},
}
\bib{Ber34}{article}{
author={Berggren, B.},
title={Pytagoreiska triangular},
journal={Tidskrift f\"or element\"ar matematik, fysik och kemi},
volume={17},
date={1934},
pages={129--139},
}
\bib{CNT}{article}{
   author={Cha, Byungchul},
   author={Nguyen, Emily},
   author={Tauber, Brandon},
   title={Quadratic forms and their Berggren trees},
   journal={J. Number Theory},
   volume={185},
   date={2018},
   pages={218--256},
   issn={0022-314X},
   review={\MR{3734349}},
   doi={10.1016/j.jnt.2017.09.003},
}
\bib{Con}{article}{
	author={Conrad, Keith},
	title={Pythagorean descent},
	eprint={http://www.math.uconn.edu/~kconrad/blurbs/linmultialg/descentPythag.pdf}
}
\bib{Dar94}{book}{
   author={Darling, R. W. R.},
   title={Differential forms and connections},
   publisher={Cambridge University Press, Cambridge},
   date={1994},
   pages={x+256},
   isbn={0-521-46800-0},
   review={\MR{1312606}},
   doi={10.1017/CBO9780511805110},
}

\bib{FKMS}{article}{
   author={Fishman, Lior},
   author={Kleinbock, Dmitry},
   author={Merrill, Keith},
   author={Simmons, David},
   title={Intrinsic Diophantine approximation on manifolds: general theory},
   journal={Trans. Amer. Math. Soc.},
   volume={370},
   date={2018},
   number={1},
   pages={577--599},
   issn={0002-9947},
   review={\MR{3717990}},
   doi={10.1090/tran/6971},
}
\bib{GLMWY}{article}{
   author={Graham, Ronald L.},
   author={Lagarias, Jeffrey C.},
   author={Mallows, Colin L.},
   author={Wilks, Allan R.},
   author={Yan, Catherine H.},
   title={Apollonian circle packings: number theory},
   journal={J. Number Theory},
   volume={100},
   date={2003},
   number={1},
   pages={1--45},
   issn={0022-314X},
   review={\MR{1971245}},
   doi={10.1016/S0022-314X(03)00015-5},
}
\bib{IR90}{book}{
   author={Ireland, Kenneth},
   author={Rosen, Michael},
   title={A classical introduction to modern number theory},
   series={Graduate Texts in Mathematics},
   volume={84},
   edition={2},
   publisher={Springer-Verlag, New York},
   date={1990},
   pages={xiv+389},
   isbn={0-387-97329-X},
   review={\MR{1070716}},
}
\bib{KM15}{article}{
   author={Kleinbock, Dmitry},
   author={Merrill, Keith},
   title={Rational approximation on spheres},
   journal={Israel J. Math.},
   volume={209},
   date={2015},
   number={1},
   pages={293--322},
   issn={0021-2172},
   review={\MR{3430242}},
   doi={10.1007/s11856-015-1219-z},
}

\bib{OMe71}{book}{
   author={O'Meara, O. T.},
   title={Introduction to quadratic forms},
   note={Second printing, corrected;
   Die Grundlehren der mathematischen Wissenschaften, Band 117},
   publisher={Springer-Verlag, New York-Heidelberg},
   date={1971},
   pages={xi+342},
   review={\MR{0347768}},
}

\bib{Pan09}{article}{
   author={Panti, Giovanni},
   title={A general Lagrange theorem},
   journal={Amer. Math. Monthly},
   volume={116},
   date={2009},
   number={1},
   pages={70--74},
   issn={0002-9890},
   review={\MR{2478755}},
   doi={10.4169/193009709X469823},
}

\bib{Pan19}{article}{
	author={Panti, Giovanni},
	title={Billiards on pythagorean triples and their Minkowski functions},
	eprint={arXiv:1902.00414 [math.NT]},
}

\bib{Per29}{article}{
	author={Perron, Oskar},
	title={Die Lehre von den Ketterbr\"{u}chen},
	journal={Teubner, Leipzig},
	date={1929},
}

\bib{Rom08}{article}{
   author={Romik, Dan},
   title={The dynamics of Pythagorean triples},
   journal={Trans. Amer. Math. Soc.},
   volume={360},
   date={2008},
   number={11},
   pages={6045--6064},
   issn={0002-9947},
   review={\MR{2425702 (2009i:37101)}},
   doi={10.1090/S0002-9947-08-04467-X},
}
\bib{Way82}{article}{
   author={Wayne, Alan},
   title={A Genealogy of $120^\circ$ and $60^\circ$ Natural Triangles},
   journal={Math. Mag.},
   volume={55},
   date={1982},
   number={3},
   pages={157--162},
   issn={0025-570X},
   review={\MR{1572421}},
}

\end{biblist}
\end{bibdiv} 
\end{document}